\newtheorem{theorem}{Theorem}
\newtheorem{corollary}{Corollary}[theorem]
\newtheorem{proposition}{Proposition}
\newtheorem{lemma}[theorem]{Lemma}
\newtheorem{remark}{Remark}
\newcommand{\wc}[1]{[\textcolor{blue}{\textit{WC: #1}}]}
\newcommand{\Eb}{\mathbb{E}}
\newcommand{\Pb}{\mathbb{P}}
\newcommand{\Real}{\mathbb{R}}
\newcommand{\Norm}[1]{\left\Vert#1\right\Vert}
\newcommand{\norm}[1]{\Vert#1\Vert}
\newcommand{\Abs}[1]{\left|#1\right|}
\newcommand{\abs}[1]{|#1|}
\newcommand{\Ip}[1]{\left\langle#1\right\rangle}
\newcommand{\ip}[1]{\langle#1\rangle}
\newcommand{\Set}[1]{\left\{#1\right\}}
\newcommand{\set}[1]{\{#1\}}
\newcommand{\Uv}{\mathbf{U}}
\newcommand{\Yv}{\mathbf{Y}}
\newcommand{\Ac}{\mathcal{A}}
\newcommand{\Ec}{\mathcal{E}}
\newcommand{\Gc}{\mathcal{G}}
\newcommand{\Nc}{\mathcal{N}}
\newcommand{\Sb}{\mathbb{S}}
\title{The Berry-Esseen Bound for \\High-dimensional Self-normalized Sums}
\author{Woonyoung Chang$^*$}
\author{Kenta Takatsu$^*$}
\author{Konrad Urban\footnote{Alphabetical order}}
\author{Arun Kumar Kuchibhotla}
\affil{Department of Statistics and Data Science, Carnegie Mellon University}
\date{}
\DeclareMathOperator{\E}{\mathbb{E}}
\begin{document}
\maketitle
\begin{abstract}
This manuscript studies the Gaussian approximation of the coordinate-wise maximum of self-normalized statistics in high-dimensional settings. We derive an explicit Berry-Esseen bound under weak assumptions on the absolute moments. When the third absolute moment is finite, our bound scales as $\log^{5/4}(d)/n^{1/8}$ where $n$ is the sample size and $d$ is the dimension. Hence, our bound tends to zero as long as $\log(d)=o(n^{1/10})$. Our results on self-normalized statistics represent substantial advancements, as such a bound has not been previously available in the high-dimensional central limit theorem (CLT) literature.
%This manuscript provides a Berry-Esseen type result for the high-dimensional self-normalized sums in $\mathbb{R}^d$. The main result establishes that self-normalized sums can be approximated with the Gaussian random vector with the matching \emph{correlation} matrix. The manuscript provides that, under a finite third of moments of the observation, the approximation error converges at the rate bounded by $n^{-1/8}$ up to a polynomial factor of $\log d$, where $n$ represents the sample size. The Gaussian approximation converges to zero as long as $\log d = o(n^{1/10})$, extending the classical central limit theorem of the self-normalized sum to the high-dimensional settings.
\end{abstract}
%\wc{Hotelling's $T^2$ statistics of interest may differ \cite{huang2022overview}}
\section{Introduction}
Let $X_1,\ldots, X_n$ be independent and identically distributed (IID) centered random vectors in $\Real^d$. The vector $T_n\in\Real^d$ of component-wise self-normalized sum of $X_1,\ldots,X_n$ is defined as
\begin{eqnarray*}
    e_j^\top T_n := \frac{|\sum_{i=1}^{n} e_{j}^{\top} X_{i}|}{\sqrt{\sum_{i=1}^{n}(e_{j}^{\top} X_{i})^{2}}}\quad\mbox{for all}\quad j=1,\ldots,d,
\end{eqnarray*} where $e_j$ denotes the $j$:th canonical basis of $\Real^d$.

In the one-dimensional case, the limiting distribution of $T_n$ has been extensively studied. One remarkable property of the self-normalized sum is that it often requires less stringent moment conditions than the
classical limit theorems for the usual sums. A key result by \cite{Gine1997} resolved a conjecture raised by earlier works \citep{Logan1973,Griffin1991,bentkus1996berry} showing that the self-normalized sum $T_n$ is asymptotically normal if and only if $X_1$ belongs to the domain of attraction of the normal law. This was further generalized by \cite{Chistyakov2004}, who provided necessary and sufficient conditions for $T_n$ to converge to an $\alpha$-stable distribution\footnote{We note that the normal distribution is stable with index $\alpha=2$.}. In addition, the distributional approximation of $T_n$ to the standard normal distribution has been explored through various approaches such as uniform Berry-Esseen bounds \citep{Hall1988,bentkus1996berry, Bentkus1996noniid}, non-uniform Berry-Esseen bounds and Cramér-type large deviation \citep{Wang1999, Jing2003, Robinson2005, Hu2009, Beckedorf2025}, and Edgeworth expansion \citep{Finner2010, Derumigny2024, Beckedorf2025}. This list is far from exhaustive.

Extending these results to high-dimensional settings remains surprisingly underexplored despite the growing interest and recent advances in the high-dimensional central limit theorem \citep{kuchibhotla2020high,kuchibhotla2021high,lopes2022central,chernozhuokov2022improved,Chernozhukov2023nearly}. Recent work by \cite{das2024central} establishes near-$n^{-\kappa/2}$ Berry-Esseen rate for $T_n$ in high dimension under the existence of $(2+\kappa)$:th moments of $e_j^\top X_i$ for $0<\kappa\leq 1$. However, their results are restricted to the special case where $e_j^\top X_i$ are independent across both $i$ and $j$. 

This manuscript aims to bridge this gap and studies the Gaussian approximation of $T_n$ under mild conditions and high dimensional settings where the dimension $d$ can grow much faster than the sample size $n$. Specifically, we focus on bounding the Kolmogorov-Smirnov (KS) distance between $\norm{T_n}_\infty$ and $\norm{Z_n}_\infty$, for some $d$-dimensional Gaussian vector $Z_n$, with explicit dependencies of $n$ and $d$ on our bound. To this end, we consider two different Gaussian approximations:
\begin{itemize}
    \item \textbf{Best Gaussian Approximation:}
    \begin{equation}\label{eq:uniform-clt}
        \Delta_n :=\inf_{Z\sim G:G\in\Gc}\sup_{t\in\mathbb{R}}\, \Abs{\mathbb{P}(\|T_{n}\|_\infty \leq t)-\mathbb{P}(\|Z\|_{\infty} \leq t)},
    \end{equation} 
    where $\Gc$ is a collection of mean-zero $d$-dimensional Gaussian distribution whose variance-covariance matrix is a correlation matrix.
    %\kt{If we use the single truncation level, we get a larger class of the Gaussian distributions with the diagonal of the covariance matrix bounded by one. For inferential purposes, we can still perform \v Sid\'ak correction.}
    \item \textbf{Moment Matching Gaussian Approximation:}
    \begin{equation}\label{eq:uniform-clt-2}
        \Delta_n^X :=\sup_{t\in\mathbb{R}}\, \Abs{\mathbb{P}(\|T_{n}\|_\infty \leq t)-\mathbb{P}(\|Z^X\|_{\infty} \leq t)},
    \end{equation} 
    where $Z^X$ be centered $d$-dimensional Gaussian with covariance matrix equal to ${\rm Corr}(X_1)$.
    %\kt{To obtain this result, we still need the truncation level that is $a_j$. I don't know how to prove the CLT under the domain of the attraction type assumption, but we can still prove under the assumption $\Eb \max_{1\le j \le d}\, |e_j^\top X_1/ \sigma_j|^{2+\delta} < \infty$. } 
\end{itemize}
By construction, $\Delta_n\leq \Delta_n^X$, and $\Delta_n^X$ is well-defined only if $X$ has a finite second moment whereas $\Delta_n$ does not require this. We further show in this manuscript that $\Delta_n$ can converge to $0$ even when the second moment of $X_1$ diverges.

Finally, we discuss how our findings can be utilized for valid inferential procedures in high-dimensional settings. A common approach in the high-dimensional CLT literature is to establish bootstrap consistency, which enables an accurate bootstrap quantile estimation. This can be explored through \emph{moment matching Gaussian approximation} framework in which we are interested in studying $\Delta_n^{*} := \sup_{t}\abs{\mathbb{P}(\|T_{n}^*\|_\infty \leq t|X_1,\ldots,X_n)-\mathbb{P}(\|Z^X\|_{\infty} \leq t)}$ where $T_n^*$ is a bootstrap analog of $T_n$. Alternatively, one can leverage the property that the covariance matrix of the approximating Gaussian distribution is a correlation matrix with unit diagonals. This enables valid yet conservative inferential methods such as Bonferonni or Šidák correction which suggests the quantiles $K_\alpha^B = \Phi^{-1}(1-\alpha/(2p))$ and $K_\alpha^S=\Phi^{-1}((1+(1-\alpha)^{1/p})/2)$ for the nominal probability $1-\alpha$. Formal exploration of these methods is deferred to future work.

\paragraph{Notation.} For two real numbers $a$ and $b$, let $a\vee b=\max\{a,b\}$, $a\wedge b=\min\{a,b\}$, $a_+=\max\{a,0\}$, and $a_-=\max\{-a,0\}$. The $j$:th canonical basis of $\Real^d$ is written as $e_j$ for all $j=1,\ldots,d$, i.e., a $d$-dimensional vector of all zeros except one at the $j$:th element. For any $x\in\Real^d$, we write $\norm{x}_2=\sqrt{x^\top x}$. Moreover, we denote the maximum entry of $x$ as $\norm{x}_\infty=\max_{1\leq j\leq d}\abs{e_j^\top x}$. The unit sphere in $\Real^d$ is $\Sb^{d-1}=\set{\theta\in\Real^d:\norm{\theta}_2=1}$. Let a $k$:th order tensor $T$ be viewed as multi-dimensional arrays in $(\Real^d)^{\otimes k}$. For two $k$:th order tensors $A$ and $B$ of matching dimensions, we define their inner product as $\ip{A,B} :=\sum_{1\leq i_1,\ldots,i_k\leq d}A_{i_1,\ldots,i_k}B_{i_1,\ldots,i_k}$.
\section{Main Results}
Consider independent and identically distributed (IID) mean-zero and non-degenerate variance random vectors $X_{1}, \ldots, X_{n} \in \mathbb{R}^d$ with covariance matrix $\Sigma \in \mathbb{R}^{d\times d}$. We allow the dimension $d$ to grow with $n$ and possibly larger than $n$. We define a self-normalized sum $T_n \in \mathbb{R}^d$ as 
\begin{align}\label{def:Tn}
    e_j^\top T_n := \frac{|\sum_{i=1}^{n} e_{j}^{\top} X_{i}|}{\sqrt{\sum_{i=1}^{n}(e_{j}^{\top} X_{i})^{2}}} \quad\mbox{for all}\quad j=1,\ldots,d.
\end{align}
% Let $Z$ be a multivariate Gaussian random vector with mean zero and some correlation matrix $\Omega$, which we define below. Similarly, let $Z^X$ be a multivariate Gaussian random vector with mean zero and a covariance matrix $\Omega^X$ where $\Omega^X$ is the correlation matrix of $X_1$. 

% This manuscript provides the upper bounds on the uniform CLT:
% \begin{align}
%     \Delta_n &:= \sup_{t\in\mathbb{R}}\, |\mathbb{P}(\|T_{n}\|_\infty \leq t)-\mathbb{P}(\|Z\|_{\infty} \leq t)| \quad \mbox{and}\label{eq:uniform-clt}\\
%     \Delta_n^X &:= \sup_{t\in\mathbb{R}}\, |\mathbb{P}(\|T_{n}\|_\infty \leq t)-\mathbb{P}(\|Z^X\|_{\infty} \leq t)|\label{eq:uniform-clt-2}.
% \end{align}

The main result is stated in terms of the truncated moments, which permits the random vector $X_1$ to have a diverging second moment. Given a random vector $X_1$ and $n \ge 1$, we define a non-negative number $a_j$, which satisfies the following inequality: 
\begin{align}
    a_{j}^{2}:=\sup \{b \ge 0: \E[(e_{j}^{\top} X_{1})^{2} \mathbf{1}\{(e_{j}^{\top} X_{1})^{2} \leq b n\}] \geq b\}.
\end{align}
For $1 \le i \le n$ and $1 \le j \le d$, we define the truncated random vector
\begin{align}\label{def:truncted_rv}
    e_{j}^{\top} Y_{i}:=\frac{e_{j}^{\top} X_{i}}{a_{j} n^{1 / 2}} \mathbf{1}\{(e_{j}^{\top} X_{i})^{2} \leq a_{j}^{2} n\}.
\end{align}
By Lemma 1.3 of \citet{bentkus1996berry}, we have $\|Y_i\|_\infty \le 1$ almost surely and $\E[(e_j^\top Y_i)^2] = 1/n$ for all $1 \le j \le d$. We provide the main results below:
\begin{theorem}\label{thm:berry-esseen}  There exists an absolute constant $C > 0$ such that 
\begin{align*}
    \Delta_n &\le C\Bigg(n \mathbb{P}\left(\max _{1 \leq j \leq d} \frac{(e_{j}^{\top} X_{1})^{2}}{a_{j}^{2}}>n\right)  + \left(n\log^{5/2}(ed)\norm{\Eb[Y_1]}_\infty\right)^{1/2}+\left(n\log^5(ed)\Eb\norm{Y_1}_\infty^3\right)^{1/4}\Bigg).
\end{align*}
Additionally, there exists an absolute constant $C > 0$ such that 
\begin{align*}
    \Delta_n^X &\le C\Bigg(n \mathbb{P}\left(\max _{1 \leq j \leq d} \frac{(e_{j}^{\top} X_{1})^{2}}{a_{j}^{2}}>n\right) + \log(ed)\max_{1\le j \le d}\Set{\mathbb{E}\left[\frac{(e_{j}^{\top} X_{1})^2}{ a_{j}^2} \mathbf{1}\{(e_{j}^{\top} X_{1})^{2} > a_{j}^{2} n\}\right]}^{1/2}\\
    &\qquad + \left(n\log^{5/2}(ed)\norm{\Eb[Y_1]}_\infty\right)^{1/2}+\left(n\log^5(ed)\Eb\norm{Y_1}_\infty^3\right)^{1/4}\Bigg).
\end{align*}
\end{theorem}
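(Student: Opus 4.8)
The plan is to reduce the self-normalized statistic $T_n$ to a sum of the truncated, bounded, nearly-standardized vectors $Y_i$ defined in~\eqref{def:truncted_rv}, and then invoke a high-dimensional CLT for bounded summands together with a Gaussian comparison (anti-concentration) inequality. First I would split the problem along the event $E := \{\max_{1\le j\le d}(e_j^\top X_1)^2/a_j^2 \le n\}$ on which every coordinate of every $X_i$ survives truncation; a union bound gives $\Pb(E^c)\le n\,\Pb(\max_j (e_j^\top X_1)^2/a_j^2 > n)$, which is the first term in both bounds. On $E$ we have $e_j^\top Y_i = e_j^\top X_i/(a_j n^{1/2})$ for all $i,j$, so the normalizing denominator $\sum_i (e_j^\top X_i)^2$ equals $a_j^2 n \sum_i (e_j^\top Y_i)^2$, and hence $e_j^\top T_n = |\sqrt n \, e_j^\top \bar Y_n| / \sqrt{\sum_i (e_j^\top Y_i)^2}$ where $\bar Y_n = n^{-1}\sum_i Y_i$; the self-normalization is now entirely in terms of the $Y_i$.

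Next I would linearize the self-normalization. Write $V_{n,j}^2 := \sum_{i=1}^n (e_j^\top Y_i)^2$; since $\E[(e_j^\top Y_i)^2]=1/n$ we have $\E[V_{n,j}^2]=1$, and because $\|Y_i\|_\infty\le1$ the variables $(e_j^\top Y_i)^2$ are bounded, so $V_{n,j}^2$ concentrates around $1$ with fluctuations of order $n^{-1/2}$ uniformly in $j$ (a Bernstein/maximal bound over $d$ coordinates costs only a $\log d$ factor). Replacing $V_{n,j}$ by $1$ converts $\|T_n\|_\infty$ into $\|\sqrt n\,\bar Y_n\|_\infty$ up to a remainder controlled by $\max_j |\sqrt n \,e_j^\top\bar Y_n|\cdot\max_j|V_{n,j}^{-1}-1|$; the first factor is itself $O(\sqrt{\log d})$ with high probability. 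Combined with a Nazarov-type anti-concentration bound for the max of a correlation-structured Gaussian (which turns an $\epsilon$-perturbation of the statistic into an $O(\epsilon\sqrt{\log d})$ change in the KS distance), this linearization step contributes the $(n\log^{5/2}(ed)\|\E Y_1\|_\infty)^{1/2}$-type and part of the $\log$-power in the $\E\|Y_1\|_\infty^3$ term. Here the appearance of $\|\E[Y_1]\|_\infty$ is because $\E[\sqrt n\,\bar Y_n] = \sqrt n\,\E[Y_1]$ need not vanish (truncation destroys centering), so I must also shift $Y_i$ by its mean and absorb the bias $\sqrt n\|\E Y_1\|_\infty$ as an explicit perturbation — again converted to a KS bound via anti-concentration, giving exactly the $(n\log^{5/2}(ed)\|\E Y_1\|_\infty)^{1/2}$ term after optimizing the split threshold.

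With $\|T_n\|_\infty$ reduced (on $E$, after de-biasing and de-normalizing) to $\|\sqrt n\,(\bar Y_n - \E Y_1)\|_\infty$, I would apply a high-dimensional Berry--Esseen theorem for sums of independent bounded vectors — of Chernozhukov--Chetverikov--Kato / Chernozhukov--Chetverikov--Koike type — which for summands bounded by $1$ in $\ell_\infty$ with covariance having unit-order diagonal yields a KS bound of order $(\log^5(ed)\cdot n\,\E\|Y_1\|_\infty^3/n^{?})$-shape; matching the stated exponent $1/4$ on $(n\log^5(ed)\E\|Y_1\|_\infty^3)$ requires tracking that the relevant third-moment quantity is $\sum_i \E\|Y_i\|_\infty^3 = n\E\|Y_1\|_\infty^3$ and that the best available bound scales as the fourth root of $n^{-1}$ times this, as in the recent improved high-dimensional CLT rates. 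This produces a Gaussian vector whose covariance is $n\,\Cov(Y_1)$, which has diagonal entries $1 - n\E[e_j^\top Y_1]^2 = 1 - o(1)$; rescaling its diagonal to exactly $1$ costs another anti-concentration perturbation of size $O(n\|\E Y_1\|_\infty^2\sqrt{\log d})$, dominated by the already-present $\|\E Y_1\|_\infty$ term. This gives the bound on $\Delta_n$ since the resulting Gaussian has a correlation matrix and hence lies in $\Gc$.

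For $\Delta_n^X$ the only additional work is to replace the covariance $n\,\Cov(Y_1)$ of the approximating Gaussian by $\mathrm{Corr}(X_1)$. By a Gaussian comparison lemma (Chernozhukov--Chetverikov--Kato), the change in the max-KS distance is controlled by $\log(ed)$ times the square root of the maximum entrywise discrepancy between the two (correlation) matrices. That discrepancy is, coordinatewise, $|\mathrm{Corr}(X_1)_{jk} - n\Cov(Y_1)_{jk}|$, which after Cauchy--Schwarz is bounded by $\max_j \E[(e_j^\top X_1)^2 \mathbf 1\{(e_j^\top X_1)^2 > a_j^2 n\}/a_j^2]$ (the mass of the second moment removed by truncation, plus lower-order cross terms handled the same way) — precisely the extra term displayed in the $\Delta_n^X$ bound. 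I expect the main obstacle to be the bookkeeping in the linearization step: one must simultaneously control $V_{n,j}\approx1$, the bias $\E Y_1$, and the diagonal correction, and convert each into a KS bound with the correct $\log$-power and the correct exponent ($1/2$ versus $1/4$) via Nazarov anti-concentration while choosing the truncation-level thresholds to balance the tail term $n\Pb(\cdots)$ against the bulk terms — getting the exponents to come out as stated, rather than the cruder rates a naive argument would give, is the delicate part and is where the recent high-dimensional CLT machinery must be invoked in its sharpest form.
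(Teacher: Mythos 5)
Your high-level decomposition matches the paper's: pass to the truncated variables $Y_i$ via a union bound on the tail event (this is the paper's Lemma~\ref{lemma:remainder1}), remove the self-normalizing denominator, invoke a high-dimensional CLT for the centered sum, and for $\Delta_n^X$ use a Gaussian comparison inequality (Lemma~\ref{lemma:remainder3}). But the heart of the argument --- how to compare the self-normalized statistic $T_n^Y$ to the plain sum $\Yv$ --- is where your proposal diverges from the paper and where it has a genuine gap. You propose a direct ``replace $V_{n,j}$ by $1$'' linearization, asserting that $V_{n,j}^2=\sum_i (e_j^\top Y_i)^2$ concentrates around $1$ with fluctuations of order $n^{-1/2}$ uniformly in $j$. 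That heuristic is wrong under the weak third-moment assumptions of the theorem: since all one controls is $\E[(e_j^\top Y_1)^2]=1/n$ and $\|Y_i\|_\infty\le 1$, the correct maximal second-moment bound (Lemma~\ref{lemma:moment-bounds-for-Ustats}) is $\E[\max_j|V_{n,j}^2-1|^2]\lesssim n\log(ed)\E\|Y_1\|_\infty^3$, which is $n^{-1/2}\log(ed)$ --- not $n^{-1}$ --- at the favorable scaling $\E\|Y_1\|_\infty^3\asymp n^{-3/2}$. This slower concentration is precisely why the theorem's rate is $n^{-1/8}$ and not something closer to $n^{-1/2}$; your too-optimistic fluctuation estimate obscures the central difficulty. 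The related claim that Nazarov anti-concentration applied to the bias shift gives ``exactly the $(n\log^{5/2}(ed)\|\E Y_1\|_\infty)^{1/2}$ term'' is also not right: anti-concentration applied to a shift of size $n\|\E Y_1\|_\infty$ gives $n\|\E Y_1\|_\infty\sqrt{\log(ed)}$ (that is what appears in~\eqref{eq:pf.thm1.2}), not a square root. The $(n\log^{5/2}(ed)\|\E Y_1\|_\infty)^{1/2}$ term in the theorem comes from a different mechanism that your proposal does not have.

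That missing mechanism is the Bentkus--G\"otze smooth substitute device, which is the technical core of the paper. The function $g$ is a globally smooth, bounded-derivative replacement for $x\mapsto 1/\sqrt{x}$, and $\widetilde{Y}_i := Y_i\, g(1+\eta_{j,n})$ produces a vector $\widetilde{\Yv}$ that equals $T_n^Y$ on the event $\{\max_j|\eta_{j,n}|\le 1/2\}$ (whose complement is cheap, Lemma~\ref{lemma:remainder2}), but --- unlike $T_n^Y$ --- is an \emph{everywhere-smooth} function of $(Y_1,\ldots,Y_n)$. This is what makes the comparison of $\widetilde{\Yv}$ to $\Yv$ amenable to a multi-order Taylor expansion of the Gaussian-smoothed indicator $H_{\varepsilon,t}$ (Lemma~\ref{lemma:Y_Ytilde}), with remainders organized as $U$-statistics and controlled via decoupling (Lemma~\ref{lemma:u-statistics}), followed by an explicit optimization over the smoothing bandwidth $\varepsilon$ that produces the $\log^{5/2}$ and $\log^{5}$ powers in the final bound. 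Without this device you cannot Taylor-expand $T_n^Y$ at all (it is not smooth), and a purely event-based ``$V_{n,j}\approx 1$'' argument, while not obviously doomed, would need a careful multiplicative anti-concentration argument and a correct fluctuation bound; the bookkeeping is nontrivial and is not supplied. In short: the truncation, CLT, and Gaussian-comparison ingredients are in place, but the linearization step --- which is the part the authors flag as the real obstacle --- is sketched with an incorrect quantitative picture and without the $g$-function/$H_{\varepsilon,t}$-Taylor machinery that makes the stated exponents come out.
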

%\wc{Make corollary related to Feller conditions. See Cor 1.5. of \cite{bentkus1996berry} domain of attraction? conditional value at risk $\Eb[X|X\geq\tau]$}
\begin{corollary}\label{cor:berry-esseen}
Denote $\E (e_j^\top X_1)^2 = \sigma_{j}^2$ and suppose that $\E[\max_{1\le j \le d}|e_j^\top X_1/\sigma_{j}|^{2+\delta}] < \infty$ for some $\delta \in (0, 1]$, then the bound in Theorem~\ref{thm:berry-esseen} reduces to
    \begin{align*}
    \Delta_n \le  C\log^{5/4}(ed)n^{-\delta/8}\left(\E \max_{1\le j \le d}|e_j^\top X_1/\sigma_{j}|^{2+\delta}\right)^{1/4}.
\end{align*}
In particular, when $\E[\max_{1\le j \le d}|e_j^\top X_1/\sigma_{j}|^{3}] < \infty$, one has
\begin{align}\label{eq:be-bound-simplified}
    \Delta_n \le  C \log^{5/4}(ed)n^{-1/8}\left(\E \max_{1\le j \le d}|e_j^\top X_1/\sigma_{j}|^3\right)^{1/4}.
\end{align}
\end{corollary}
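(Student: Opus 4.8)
The plan is to derive Corollary~\ref{cor:berry-esseen} by specializing the first (and cruder) bound in Theorem~\ref{thm:berry-esseen} to the case where the normalized coordinates have a finite $(2+\delta)$:th absolute moment. Write $M := \E[\max_{1\le j\le d}|e_j^\top X_1/\sigma_j|^{2+\delta}]$ and note $M<\infty$ by hypothesis. The first step is to control the truncation constants $a_j$. Under a finite second moment, $\E[(e_j^\top X_1)^2\mathbf 1\{(e_j^\top X_1)^2\le bn\}]\to\sigma_j^2$ as $b\to\infty$, and from the defining inequality $\E[(e_j^\top X_1)^2\mathbf 1\{(e_j^\top X_1)^2\le a_j^2 n\}]\ge a_j^2$ together with monotonicity one gets that $a_j$ is comparable to $\sigma_j$; more precisely I expect the clean bound $a_j\le\sigma_j$ (since $b\le\E[(e_j^\top X_1)^2\mathbf 1\{\cdots\}]\le\sigma_j^2$ forces $b\le\sigma_j^2$) and a matching lower bound $a_j^2\ge\sigma_j^2 - \sigma_j^2 n^{-\delta/2} M^{?}$-type estimate obtained by bounding the discarded tail $\E[(e_j^\top X_1)^2\mathbf 1\{(e_j^\top X_1)^2> a_j^2 n\}]$ via Markov with the $(2+\delta)$:th moment. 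For the final rate it is enough that $a_j^2\asymp\sigma_j^2$ up to constants, which I would establish for $n$ large; small $n$ is absorbed into $C$ since $\Delta_n\le 1$ always.

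The second step is to bound each of the three terms in Theorem~\ref{thm:berry-esseen} by $C\log^{5/4}(ed)\,n^{-\delta/8}M^{1/4}$. For the tail term, $n\,\Pb(\max_j (e_j^\top X_1)^2/a_j^2>n)\le C\,n\,\Pb(\max_j|e_j^\top X_1/\sigma_j|^{2+\delta}>c\,n^{(2+\delta)/2})\le C\,n\cdot M/(c\,n^{(2+\delta)/2}) = C' M\, n^{-\delta/2}$ by Markov's inequality, and $n^{-\delta/2}\le n^{-\delta/8}$, while $M\le M^{1/4}\cdot M^{3/4}$ — here I would simply note $M\ge$ (its one-dimensional marginals)$\ge$ some constant, or more cleanly observe $M^{3/4}$ is bounded below away from $0$ since $\E|e_j^\top X_1/\sigma_j|^{2}=1$ and Jensen gives $M\ge 1$, so $M\le M^{1/4}$ — wait, that's backwards; instead use $M\ge 1$ to write $M\, n^{-\delta/2} = M^{1/4}\cdot M^{3/4} n^{-\delta/2}$ and this is $\le M^{1/4} n^{-\delta/8}$ only if $M^{3/4}n^{-3\delta/8}\le 1$, which fails for large $M$. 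The correct move is to keep the tail term as $C M n^{-\delta/2}$ and note that $M n^{-\delta/2}\le (M^{1/4} n^{-\delta/8})\cdot(M^{3/4} n^{-3\delta/8})$, so I instead bound it directly: the exponent on $n$ is better ($-\delta/2$ vs $-\delta/8$) and the exponent on $M$ is worse ($1$ vs $1/4$); to unify, use that $\Delta_n\le 1$ trivially, so whenever $M^{3/4}n^{-3\delta/8}>1$ the claimed bound exceeds $1$ and holds vacuously, and otherwise $M n^{-\delta/2}\le M^{1/4}n^{-\delta/8}$. This vacuous-regime argument is the standard device and I would state it once and apply it to all terms.

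The third step handles the moment term $(n\log^{5/2}(ed)\|\E Y_1\|_\infty)^{1/2}$ and the Lyapunov term $(n\log^5(ed)\,\E\|Y_1\|_\infty^3)^{1/4}$. Since $\E[e_j^\top Y_i]$ is the mean of a centered variable minus its truncated tail, $|\E[e_j^\top Y_1]| = |\E[(e_j^\top X_1)/(a_j\sqrt n)\,\mathbf 1\{(e_j^\top X_1)^2>a_j^2 n\}]|\le \E[|e_j^\top X_1|/(a_j\sqrt n)\,\mathbf 1\{\cdots\}]$, which by Markov against the $(2+\delta)$:th moment is $\le C M\,n^{-1/2}\cdot n^{-(1+\delta)/2} = C M n^{-(2+\delta)/2}$ up to the $a_j\asymp\sigma_j$ comparison; hence $n\log^{5/2}(ed)\|\E Y_1\|_\infty\le C M\log^{5/2}(ed)n^{-\delta/2}$ and its square root is $\le C M^{1/2}\log^{5/4}(ed)n^{-\delta/4}\le C M^{1/4}\log^{5/4}(ed)n^{-\delta/8}$ after the vacuous-regime reduction (and $\delta/4\ge\delta/8$). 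For the Lyapunov term, $\E\|Y_1\|_\infty^3 = \E[\max_j|e_j^\top Y_1|^3]$; since each $|e_j^\top Y_1|\le |e_j^\top X_1|/(a_j\sqrt n)$ and we only need the $(2+\delta)$:th moment with $\delta\le 1$, bound $|e_j^\top Y_1|^3 = |e_j^\top Y_1|^{2+\delta}\cdot|e_j^\top Y_1|^{1-\delta}\le |e_j^\top Y_1|^{2+\delta}$ using $\|Y_1\|_\infty\le 1$ a.s.\ (the earlier-cited consequence of Bentkus's Lemma 1.3), so $\E\|Y_1\|_\infty^3\le \E\max_j|e_j^\top Y_1|^{2+\delta}\le \E\max_j|e_j^\top X_1/(a_j\sqrt n)|^{2+\delta}\le C M\,n^{-(2+\delta)/2}$. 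Therefore $n\log^5(ed)\,\E\|Y_1\|_\infty^3\le C M\log^5(ed)n^{-\delta/2}$ and its fourth root is exactly $C M^{1/4}\log^{5/4}(ed)n^{-\delta/8}$, which is the dominant term and dictates the final rate. Summing the three contributions and using $\log^{5/4}(ed)n^{-\delta/8}\ge\log^{5/4}(ed)n^{-\delta/4}$ gives the stated bound; taking $\delta=1$ yields~\eqref{eq:be-bound-simplified}.

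The main obstacle, and the only place requiring genuine care rather than bookkeeping, is the two-sided comparison $a_j\asymp\sigma_j$: one must verify that the implicitly-defined truncation level is not degenerately small, which requires showing the discarded quadratic tail is a lower-order correction — and this is precisely where the $(2+\delta)$:th moment (rather than just the second) is used, via $\E[(e_j^\top X_1)^2\mathbf 1\{(e_j^\top X_1)^2> a_j^2 n\}]\le (a_j^2 n)^{-\delta/2}\E|e_j^\top X_1|^{2+\delta}$. Everything else is Markov's inequality, the a.s.\ bound $\|Y_1\|_\infty\le 1$, the interpolation $|t|^3\le|t|^{2+\delta}$ for $|t|\le 1$ and $\delta\le1$, and the routine ``the bound is vacuous when it exceeds $1$'' reduction to absorb the mismatch between the tail term's faster $n$-decay and slower $M$-growth into the universal constant $C$.
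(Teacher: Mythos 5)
Your proposal follows essentially the same route as the paper's proof: both dichotomize on whether $a_j^2 \ge \sigma_j^2/2$ (absorbing the other case via $\Delta_n\le 1$ and a Markov bound on the discarded quadratic tail), use Markov's inequality for the tail-probability term and $\|\E Y_1\|_\infty$, exploit $\|Y_1\|_\infty\le 1$ a.s.\ to interpolate $|t|^3\le |t|^{2+\delta}$ for the Lyapunov term, and invoke the ``vacuous when the bound exceeds one'' reduction to unify the three contributions (the paper embeds this last step in its proof of Theorem~\ref{thm:berry-esseen} by assuming $n\log^{5/2}(ed)\|\E Y_1\|_\infty\le 1$ and $n\log^5(ed)\E\|Y_1\|_\infty^3\le 1$ at the outset). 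The one place you frame as uncertain — the two-sided comparison $a_j\asymp\sigma_j$ — is handled in the paper exactly as you guess: $a_j\le\sigma_j$ comes free from \Cref{lemma:truncation}, and the lower bound $a_j^2\ge\sigma_j^2/2$ is not \emph{proved} to hold but is simply the non-degenerate branch of the dichotomy, the other branch being disposed of directly.
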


\begin{remark}[On the rate of convergence]\label{remark:improvements}
    The result \eqref{eq:be-bound-simplified} suggests that $\Delta_n$ tends to zero as long as $\log d = o(n^{1/10})$. This manuscript also establishes that the rate of convergence of $\Delta_n$ is of the order $(\log d)^{5/4}n^{-1/8}$ under the most favorable case with the finite third moment of $\|X_1\|_\infty$. Without considering self-normalization, the approximation error has been shown to converge as fast as $\mathrm{polylog}(dn)n^{-1/2}$ in \citet{kuchibhotla2020high}. Corresponding improvements under self-normalization require significant efforts due to the complex dependence between the summands of the self-normalized sum. To the best of our knowledge, the results corresponding to \Cref{thm:berry-esseen} are not available in the literature without strong assumptions; \citet{das2024central}, for instance, provides the bound on $\Delta_n$ under the assumption that the components of $X_1$ are also IID. 
\end{remark}
\subsection{Sketch of the proof}
This section outlines the proof of Theorem~\ref{thm:berry-esseen}. The proof follows similarly to that of Theorem 1.4 of \cite{bentkus1996berry}, with crucial intermediate steps differing. We first introduce some notation. For each truncated random variable $Y_i$, defined in \eqref{def:truncted_rv}, the corresponding self-normalized sum for $1 \le j \le d$ is denoted by:
\begin{align}\label{def:TnY}
    e_j^\top T_n^Y := \frac{|\sum_{i=1}^{n} e_{j}^{\top} Y_{i}|}{\sqrt{\sum_{i=1}^{n}(e_{j}^{\top} Y_{i})^{2}}}.
\end{align}
Let $Z := \sum_{i=1}^n Z_i$ be the sum of $n$ IID mean-zero Gaussian random vectors such that ${\rm Var}(Y_i)={\rm Var}(Z_i)$ for all $i=1,\ldots,n$. %Next, we introduce a smooth approximation of the mapping $t \mapsto |t|^{-1/2}$.
Let $g: \mathbb{R} \mapsto \mathbb{R}$ be any infinitely differentiable with bounded derivatives, satisfying 
\begin{align}
    \frac{1}{8} \leq g(x) \leq 2 \quad \mbox{for all} \quad x \in \mathbb{R}, \quad \mbox{and}\quad g(x)=\frac{1}{\sqrt{|x|}}, \quad \mbox{if}\quad |x| \in\left[\frac{1}{4}, \frac{7}{4}\right]. 
\end{align}
We define a random vector $\widetilde{Y}_i\in\mathbb{R}^d$ via 
\begin{align}\label{def:Ytilde}
    e_j^\top\widetilde{Y}_i := e_j^\top Y_{i}g(1+\eta_{j, n})  \quad \mbox{with}\quad \eta_{j, n}=\sum_{i=1}^{n}\left\{(e_{j}^{\top} Y_{i})^{2}-\frac{1}{n}\right\}.
\end{align}
Finally, the sums of random vectors are denoted by $\Yv := \sum_{i=1}^nY_i$ and $\widetilde{\Yv}:=\sum_{i=1}^n\widetilde{Y}_i$. With these notations in place, we proceed with the main proof. First, by the triangle inequalities, we obtain 
\begin{align*}
    \Delta_n &\le \sup_{t\in\mathbb{R}}\, |\mathbb{P}(\|T_{n}\|_\infty \leq t)-\mathbb{P}(\|T_{n}^Y\|_{\infty} \leq t)| + \sup_{t\in\mathbb{R}}\, |\mathbb{P}(\|T_{n}^Y\|_\infty \leq t)-\mathbb{P}(\|\widetilde{\Yv}\|_{\infty} \leq t)|\\
    &\qquad +\sup_{t\in\mathbb{R}}\, |\mathbb{P}(\|\widetilde{\Yv}\|_\infty \leq t)-\mathbb{P}(\|Z\|_{\infty} \leq t)| \\
    &= \mathbf{I_1} + \mathbf{I_2} + \sup_{t\in\mathbb{R}}\, |\mathbb{P}(\|\widetilde{\Yv}\|_\infty \leq t)-\mathbb{P}(\|Z\|_{\infty} \leq t)|
\end{align*}
and 
\begin{align*}
    \Delta_n^X &\le \mathbf{I_1} + \mathbf{I_2} + \sup_{t\in\mathbb{R}}\, |\mathbb{P}(\|\widetilde{\Yv}\|_\infty \leq t)-\mathbb{P}(\|Z^X\|_{\infty} \leq t)|+\sup_{t\in\mathbb{R}}\, |\mathbb{P}(\|Z\|_\infty \leq t)-\mathbb{P}(\|Z^X\|_{\infty} \leq t)|. 
\end{align*}
\Cref{lemma:remainder1} and \cref{lemma:remainder2} establish with some universal constant $C>0$,
\begin{align*}
    \mathbf{I_1} + \mathbf{I_2} \le n \mathbb{P}\left(\max _{1 \leq j \leq d} \frac{(e_{j}^{\top} X_{1})^{2}}{a_{j}^{2}}>n\right) + C\log(ed)n \mathbb{E}\|Y_1\|_\infty^3.
\end{align*}
\Cref{lemma:remainder3} also proves with some universal constant $C > 1$,  
\begin{align*}
    \sup_{t\in\mathbb{R}}\, |\mathbb{P}(\|Z\|_\infty \leq t)-\mathbb{P}(\|Z^X\|_{\infty} \leq t)| \le C\log(ed) R_n^{1/2},
\end{align*}
% \Cref{lemma:remainder3} proves that there exists a universal constant $C > 1$ such that  
% \begin{align*}
%     \mathbf{I_3} \le C\lambda^{-1}_{\min}(\Omega)\log(ed) \log(R_n^{-1}) R_n
% \end{align*}
    % where $\lambda_{\min}(\Omega)$ is the smallest eigenvalue of $\Omega$ and 
where 
    \begin{align*}
        R_n := \mathbb{E}\left[\frac{(e_{j}^{\top} X_{1})^2}{ a_{j}^2} \mathbf{1}\{(e_{j}^{\top} X_{1})^{2} > a_{j}^{2} n\}\right] + n^2\|\mathbb{E}[Y_1]\|_\infty^2.
    \end{align*}
    
It remains to control $\sup_{t\in\mathbb{R}}\, |\mathbb{P}(\|\widetilde{\Yv}\|_\infty \leq t)-\mathbb{P}(\|Z\|_{\infty} \leq t)|$. For $\varepsilon > 0$, we define a smooth approximation of the indicator function $H_{\varepsilon,t} : \mathbb{R}^d \mapsto \mathbb{R}$ by Gaussian convolution as 
\begin{align}\label{def:H-smooth}
    H_{\varepsilon,t}(x) = \mathbb{E}[\mathbf{1}\{\|x+\varepsilon W\|_\infty \le t\}]\quad\mbox{where}\quad W \overset{d}{=} N(0, I_{d}).
\end{align}
Then, by the smoothing lemma (such as Lemma 2.4 of \citet{fang2021high} and Lemma 1 of \citet{kuchibhotla2020high}), 
\begin{align*}
    &\sup_{t\in\mathbb{R}}\, |\mathbb{P}(\|\widetilde{\Yv}\|_\infty \leq t)-\mathbb{P}(\|Z\|_{\infty} \leq t)| \\
    &\qquad \le \sup_{t\in\mathbb{R}}\, |\mathbb{P}(\|\widetilde{\Yv}\|_\infty \leq t)-\mathbb{P}(\|\Yv\|_{\infty} \leq t)| + \sup_{t\in\mathbb{R}}\, |\mathbb{P}(\|\Yv\|_\infty \leq t)-\mathbb{P}(\|Z\|_{\infty} \leq t)|\\
    &\qquad \le \sup_{t\in\mathbb{R}}\, |\mathbb{E}[H_{\varepsilon,t}(\widetilde{\Yv})]-\mathbb{E}[H_{\varepsilon,t}(\Yv)]| + \sup_{t\in\mathbb{R}}\, |\mathbb{P}(\|\Yv\|_\infty \leq t)-\mathbb{P}(\|Z\|_{\infty} \leq t)|+ C\varepsilon \log(ed) ,
\end{align*} for any $\varepsilon>0$ and a universal constant $C>0$.
We prove that for a possibly different universal constant $C>0$,
\begin{align}\label{eq:pf.thm1.1}
    &\inf_{\varepsilon>0}\Set{ \sup_{t\in\mathbb{R}}\, |\mathbb{E}[H_{\varepsilon,t}(\widetilde{\Yv})]-\mathbb{E}[H_{\varepsilon,t}(\Yv)]| + C\varepsilon \log(ed)}\nonumber\\
    &\qquad \leq C\Set{\left(n\log^{5/2}(ed)\norm{\Eb[Y_1]}_\infty\right)^{1/2}+\left(n\log^5(ed)\Eb[\norm{Y_1}_\infty^3]\right)^{1/4}},
\end{align} as long as the right-hand side is no greater than $1$. We further prove that
\begin{equation}\label{eq:pf.thm1.2}
    \sup_{t\in\mathbb{R}}\, |\mathbb{P}(\|\Yv\|_\infty \leq t)-\mathbb{P}(\|Z\|_{\infty} \leq t)|\leq C\Set{n\log^{1/2}(ed)\norm{\Eb[Y_1]}_\infty+\left(n\log^5(ed)\Eb\norm{Y_1}_\infty^3\right)^{1/4}},
\end{equation} provided that the right-hand side is no greater than 1. Putting all together implies Theorem~\ref{thm:berry-esseen}. 
\section{Comments on the proof technique and refinements}
As mentioned in \Cref{remark:improvements}, the Berry-Esseen bound for the sum of independent random variables can be improved up to $\mathrm{polylog}(dn)n^{-1/2}$ in high-dimensional settings \citep{kuchibhotla2020high}. While the analogous refinements may be feasible by following the general arguments in \citet{kuchibhotla2021high} or \citet{kuchibhotla2020high}, the derivation will be considerably different due to the complex dependence among the self-normalized sum.

The current bottleneck is precisely in the handling of the following triangle inequality:
\begin{align*}
    &\sup_{t\in\mathbb{R}}\, |\mathbb{P}(\|\widetilde{\Yv}\|_\infty \leq t)-\mathbb{P}(\|Z\|_{\infty} \leq t)| \\
    &\qquad \le \sup_{t\in\mathbb{R}}\, |\mathbb{P}(\|\widetilde{\Yv}\|_\infty \leq t)-\mathbb{P}(\|\Yv\|_{\infty} \leq t)| + \sup_{t\in\mathbb{R}}\, |\mathbb{P}(\|\Yv\|_\infty \leq t)-\mathbb{P}(\|Z\|_{\infty} \leq t)|
\end{align*}
where we directly compared the distributions of $\|\widetilde{\Yv}\|_\infty$ and $\|\Yv\|_\infty$. This is an approach took by \citet{bentkus1996berry, Bentkus1996noniid} for the univariate settings as well. We may consider an alternative approach for the refinement. First, we define the following mapping of any random vector $\xi \in \mathbb{R}^d$, 
\begin{align*}
    \varphi_j(\xi_1, \ldots, \xi_n) := \sum_{i=1}^n \xi_{i}g(1+\eta^\xi_{j, n})  \quad \mbox{with} \quad\eta^\xi_{j, n}=\sum_{i=1}^{n}\left\{(e_{j}^{\top} \xi_{i})^{2}-1/n\right\}.
\end{align*}
Adopting this notation to $(Y_1, \ldots, Y_n)$ and $(Z_1, \ldots, Z_n)$, we can define
\begin{align*}
    e_j^\top \widetilde{\Yv} := \varphi_j(Y_1, \ldots, Y_n) \quad \mbox{and}\quad  e_j^\top \widetilde{\mathbf{Z}} := \varphi_j(Z_1, \ldots, Z_n).
\end{align*}
We can now control the earlier inequality as follows:
\begin{align*}
    &\sup_{t\in\mathbb{R}}\, |\mathbb{P}(\|\widetilde{\Yv}\|_\infty \leq t)-\mathbb{P}(\|Z\|_{\infty} \leq t)| \\
    &\qquad \le \sup_{t\in\mathbb{R}}\, |\mathbb{P}(\|\widetilde{\Yv}\|_\infty \leq t)-\mathbb{P}(\|\widetilde{\mathbf{Z}}\|_{\infty} \leq t)| + \sup_{t\in\mathbb{R}}\, |\mathbb{P}(\|\widetilde{\mathbf{Z}}\|_\infty \leq t)-\mathbb{P}(\|Z\|_{\infty} \leq t)|
\end{align*}
where the second term can be easily managed by the argument presented in this manuscript. We are now left with the first term. For $k \in \{0,\ldots, n\}$, we define 
\begin{align*}
    (Y_{1:k}, Z_{(k+1):n}) &:= (Y_1, \ldots, Y_k, Z_{k+1}, \ldots, Z_n).
\end{align*}
In particular, $(Y_{1:k}, Z_{(k+1):n}) \equiv (Y_1, \ldots,  Y_n)$ when $k=n$ and  $(Y_{1:k}, Z_{(k+1):n}) \equiv (Z_1, \ldots,  Z_n)$ when $k=0$.
After invoking the smoothing inequality with telescoping, the expression reduces to 
\begin{align*}
    &\sup_{t\in\mathbb{R}}\, |\mathbb{P}(\|\widetilde{\Yv}\|_\infty \leq t)-\mathbb{P}(\|\widetilde{\mathbf{Z}}\|_{\infty} \leq t)| \\
    &\qquad \le\sup_{t\in\mathbb{R}}\, |\mathbb{E}[H_{\varepsilon,t}\circ\varphi(Y_1, \ldots, Y_n) ]-\mathbb{E}[H_{\varepsilon,t}\circ\varphi(Z_1, \ldots, Z_n)]| + C\varepsilon \log(ed) \\
    &\qquad \le \sum_{j=0}^n \sup_{t\in\mathbb{R}}\, |\mathbb{E}[H_{\varepsilon,t}\circ \varphi(Y_{1:(n-j-1)}, Z_{(n-j):n})]-\mathbb{E}[H_{\varepsilon,t}\circ \varphi(Y_{1:(n-j)}, Z_{(n-j+1):n})]| + C\varepsilon \log(ed).
\end{align*}
This procedure performs the classical Lindeberg swapping through the nonlinear map $H_{\varepsilon,t}\circ \varphi$. Additionally, as the derivatives of $H_{\varepsilon,t}$ are zero over most regions, sharper bounds on the remainder term from the Taylor series expansion may be expected. This approach has been employed for the high-dimensional CLT without self-normalization---see Section 6 and Step C onward in \citet{kuchibhotla2021high}. We anticipate that this approach will improve the $n^{-1/8}$ term to $n^{-1/6}$ in \eqref{eq:be-bound-simplified}.
\section{Proof of Theorem~\ref{thm:berry-esseen}}
We may assume that $n\log^{5/2}(ed)\norm{\Eb[Y_1]}_\infty\leq 1$ and $n\log^5(ed)\Eb\norm{Y_1}_\infty^3\leq 1$ since otherwise Theorem~\ref{thm:berry-esseen} holds by simply taking a sufficiently large universal constant. 

The rest of the section is dedicated to the proofs of \eqref{eq:pf.thm1.1} and \eqref{eq:pf.thm1.2}.

\paragraph{Proof of \eqref{eq:pf.thm1.1}}
Lemma~\ref{lemma:Y_Ytilde} provides an upper bound for $|\mathbb{E}[H_{\varepsilon,t}(\widetilde{\Yv})]-\mathbb{E}[H_{\varepsilon,t}(\Yv)]|$ where $h_j=h_j(\varepsilon)$ (defined as \eqref{def:hj} in Lemma~\ref{lemma:Y_Ytilde} below), for $j=1,\ldots,4$, can be further bounded via Lemma~2.3 of \cite{fang2021high} as
\begin{equation*}
    h_j\le C\varepsilon^{-j}(\log d)^{j/2},
\end{equation*} for a constant $C$ only depends on $j\in\mathbb{N}$. For the sake of simplicity, we denote $\mu_3 = n\Eb[\norm{Y_1}_\infty^3]$ and $\mu_1 = n\norm{\Eb[Y_1]}_\infty$. We then aim to minimize
\begin{align*}
    \phi(\varepsilon)&=\varepsilon^{-1}\left(\log^{3/2}(ed)\mu_3+\log^{1/2}(ed)\mu_1\right) + \varepsilon^{-2}\left(\log^{3}(ed)\mu_3+\log(ed)\mu_1^2\right) \\
    &\qquad+\varepsilon^{-3}\log^{3/2}(ed)\mu_3+\varepsilon^{-4}\log^{2}(ed)\mu_3^2+\varepsilon\log(ed).
\end{align*} We set
\begin{align*}
    \varepsilon &= \log(ed)^{1/4}(\mu_3^{1/2}+\mu_1^{1/2})+\log(ed)^{2/3}\mu_3^{1/3}+\mu_1^{2/3}+\log(ed)^{1/8}\mu_3^{1/4}+\log(ed)^{1/5}\mu_3^{2/5}.
\end{align*} This choice allows $\phi(\varepsilon)\leq 4\varepsilon\log(ed)$, and we further deduce that
\begin{align*}
    \frac{1}{4}\phi(\varepsilon)&\leq \left(\log^{5/2}(ed)\mu_3\right)^{1/2}+\left(\log^{5}(ed)\mu_3\right)^{1/3}+\left(\log^{9/2}(ed)\mu_3\right)^{1/4}\\
    &\qquad+\left(\log^{3}(ed)\mu_3\right)^{2/5}+\left(\log^{5/2}(ed)\mu_1\right)^{1/2}+\left(\log^{3/2}(ed)\mu_1\right)^{2/3}\\
    &\leq4\left(\log^5(ed)\mu_3\right)^{1/4}+2\left(\log^{5/2}(ed)\mu_1\right)^{1/2}.
\end{align*} This proves \eqref{eq:pf.thm1.1}.
\paragraph{Proof of \eqref{eq:pf.thm1.2}}
From a simple implication, it first follows that 
\begin{align*}
    &\sup_{t\in\mathbb{R}}\, \Set{\mathbb{P}(\|\Yv\|_\infty \leq t)-\mathbb{P}(\|Z\|_{\infty} \leq t)}\\
    &\qquad \le \sup_{t\in\mathbb{R}}\,\Set{\mathbb{P}(\|\Yv-\E[\Yv]\|_\infty -\|\E[\Yv]\|_\infty \leq t)-\mathbb{P}(\|Z\|_{\infty} \leq t)}
    \\
    &\qquad \le \sup_{t\in\mathbb{R}}\,\bigg\{|\mathbb{P}(\|\Yv-\E[\Yv]\|_\infty \leq t + \|\E[\Yv]\|_\infty)-\mathbb{P}(\|Z\|_{\infty} \leq t+\|\E[\Yv]\|_\infty)\\
    &\qquad\qquad+\mathbb{P}(\|Z\|_{\infty} \leq t+\|\E[\Yv]\|_\infty)-\mathbb{P}(\|Z\|_{\infty} \leq t)\bigg\} \\
    &\qquad \le \sup_{t\in\mathbb{R}}\,|\mathbb{P}(\|\Yv-\E[\Yv]\|_\infty \leq t)-\mathbb{P}(\|Z\|_{\infty} \leq t)| + \sup_{t\in\mathbb{R}}\mathbb{P}(t <\|Z\|_{\infty} \leq t+\|\E[\Yv]\|_\infty).
\end{align*} Similarly, one can show that
\begin{align*}
    &\sup_{t\in\mathbb{R}}\, \Set{\mathbb{P}(\|\Yv\|_\infty >t)-\mathbb{P}(\|Z\|_{\infty} > t)}\\
    &\qquad \le \sup_{t\in\mathbb{R}}\,|\mathbb{P}(\|\Yv-\E[\Yv]\|_\infty > t)-\mathbb{P}(\|Z\|_{\infty} > t)| + \sup_{t\in\mathbb{R}}\mathbb{P}(t-\|\E[\Yv]\|_\infty<\|Z\|_{\infty} \leq t).
\end{align*} Hence, we get
\begin{align*}
    &\sup_{t\in\mathbb{R}}\, \Abs{\mathbb{P}(\|\Yv\|_\infty >t)-\mathbb{P}(\|Z\|_{\infty} > t)}\le \sup_{t\in\mathbb{R}}\,|\mathbb{P}(\|\Yv-\E[\Yv]\|_\infty \leq t)-\mathbb{P}(\|Z\|_{\infty} \leq t)|\\
    &\qquad+~\sup_{t\in\mathbb{R}}\mathbb{P}(t-\|\E[\Yv]\|_\infty<\|Z\|_{\infty} \leq t + \|\E[\Yv]\|_\infty).
\end{align*}
The last term in the display can be controlled by invoking Gaussian anti-concentration inequality, also known as Nazarov's inequality \citep{nazarov2003maximal}. In particular, Theorem 1 of \citet{chernozhukov2017detailed} applies and yields
\begin{align*}
    \sup_{t\in\mathbb{R}}\mathbb{P}(t-\|\E[\Yv]\|_\infty<\|Z\|_{\infty} \leq t + \|\E[\Yv]\|_\infty)\leq C\sqrt{\log(ed)}\Norm{\Eb[\Yv]}_\infty = Cn\sqrt{\log(ed)}\Norm{\Eb[Y_1]}_\infty,
\end{align*} where $C$ represents a universal constant\footnote{An explicit constant can be found in \cite{chernozhukov2017detailed}}. To control the remaining term
\begin{align*}
    \vartheta_n:=\Abs{\mathbb{P}(\|\Yv-\E[\Yv]\|_\infty \leq t)-\mathbb{P}(\|Z\|_{\infty} \leq t)},
\end{align*} we recall that $\Yv-\E[\Yv]$ and $Z$ are the sums of centered independent random vectors with matching second moments, and thus, the high-dimensional CLT results are applicable. In particular, we use Proposition~\ref{prop:B.1}, which refines Theorem~2.5 of \cite{chernozhuokov2022improved}. Setting $b_1=b_2=1$, $q=3$, $B_n = n^{1/2}\max_{1\leq j\leq d}(\Eb[(e_j^\top Y_1)^4])^{1/4}\leq n^{1/2}(\Eb\norm{Y_1}_\infty^3)^{1/4}$, and $D_n = n^{1/2}(\Eb\norm{Y_1}_\infty^3)^{1/3}$, it leads to
\begin{align*}
    \vartheta_n&\leq C\left[\left(n\log^5(ed)\Eb\norm{Y_1}_\infty^3\right)^{1/4}+\left(n\log^{9/2}(ed)\Eb\norm{Y_1}_\infty^3\right)^{1/3}\right]\\
    &\leq C\left(n\log^5(ed)\Eb\norm{Y_1}_\infty^3\right)^{1/4},
\end{align*} for some universal constant $C>0$ where the last inequality holds as long as $n\log^{5}(ed)\Eb\norm{Y_1}_\infty^3\leq 1$. This proves \eqref{eq:pf.thm1.2}.

\begin{proof}[\bfseries{Proof of \Cref{cor:berry-esseen}}]
First, observe that $\sigma_j^2 = \E[(e_{j}^{\top} X_{1})^2]$ and 
    \begin{align*}
        1 -\mathbb{E}\left[\left|\frac{e_{j}^{\top} X_1}{\sigma_{j} n^{1 / 2}}\right|^2 \mathbf{1}\{(e_{j}^{\top} X_1)^{2} \le a_{j}^{2} n\}\right] = \mathbb{E}\left[\left|\frac{e_{j}^{\top} X_1}{\sigma_{j} n^{1 / 2}}\right|^2 \mathbf{1}\{(e_{j}^{\top} X_1)^{2} > a_{j}^{2} n\}\right] .
    \end{align*}
    When $a_j^2 \le \sigma_j^2/2$, it follows that 
    \begin{align*}
        \mathbb{E}\left[\left|\frac{e_{j}^{\top} X_1}{\sigma_{j} n^{1 / 2}}\right|^2 \mathbf{1}\{(e_{j}^{\top} X_1)^{2} > \sigma_{j}^{2} n/2\}\right] \ge 1/2.
    \end{align*}
    Hence, there exists a constant $C \ge 2$ such that 
    \begin{align*}
    \Delta_n \le C\mathbb{E}\left[\left|\frac{e_{j}^{\top} X_1}{\sigma_{j} n^{1 / 2}}\right|^2 \mathbf{1}\{(e_{j}^{\top} X_1)^{2} > \sigma_{j}^{2} n/2\}\right] \le C\mathbb{E}\left[\frac{(e_{j}^{\top} X_{1}/\sigma_j)^{2+\delta}}{ n^{\delta/2}} \right]
    \end{align*}
    since $\Delta_n \le 1$ trivially by definition.
    Thus we may focus on the case $\sigma_j^2/2 \le a_j^2 \le \sigma_j^2$ where the second inequality follows from \Cref{lemma:truncation}. To apply Theorem~\ref{thm:berry-esseen}, we make the following observations: first by Markov inequality, we have 
\begin{align*}
    n \mathbb{P}\left(\max _{1 \leq j \leq d} \frac{(e_{j}^{\top} X_1)^{2}}{a_{j}^{2}}> n\right) \le n \frac{\E \max_{1\le j \le d}\,|e_j^\top X_1/a_j|^{2+\delta}}{n^{1+\delta/2}} \le Cn^{-\delta/2}\E\left[\max_{1\le j \le d}|e_j^\top X_1/\sigma_{j}|^{2+\delta}\right].
\end{align*}
Since $X_1$ is centered, we have
\begin{align*}
    n\|\E Y_1\|_\infty &= n\max_{1\le j \le d}\left|\mathbb{E}\left[\frac{e_{j}^{\top} X_1}{a_j n^{1 / 2}} -\frac{e_{j}^{\top} X_1}{a_j n^{1 / 2}} \mathbf{1}\{(e_{j}^{\top} X_1)^{2} \leq a_j^{2} n\}\right]\right|\\
    &= n\max_{1\le j \le d}\left|\mathbb{E}\left[\frac{e_{j}^{\top} X_1}{a_j n^{1 / 2}} \mathbf{1}\{(e_{j}^{\top} X_1)^{2} > a_j^{2} n\}\right]\right|\le Cn^{-\delta/2}\E\left[\max_{1\le j \le d}|e_j^\top X_1/\sigma_{j}|^{2+\delta}\right].
\end{align*}
Next, we obtain 
\begin{align*}
    n\E \|Y_1\|^3_\infty &= n \E \left[\max_{1\le j \le d}\, \frac{|e_j^\top X_{1}|^3}{a_j^3 n^{3 / 2}} \mathbf{1}\{(e_{j}^{\top} X_1)^{2} \leq a_j^{2} n\}\right] \le  Cn^{-\delta/2}\E\left[\max_{1\le j \le d}|e_j^\top X_1/\sigma_{j}|^{2+\delta}\right].
\end{align*}
Finally, we obtain 
\begin{align*}
     \max_{1\le j \le d}\mathbb{E}\left[\frac{(e_{j}^{\top} X_{1})^2}{ a_{j}^2} \mathbf{1}\{(e_{j}^{\top} X_{1})^{2} > a_{j}^{2} n\}\right] \le C\max_{1\leq j\leq d}\mathbb{E}\left[\frac{(e_{j}^{\top} X_{1})^{2+\delta}}{ n^{\delta/2}\sigma_j^{2+\delta}} \right].
\end{align*}
% For $R_{n}$ from \Cref{thm:berry-esseen}, we also have a similar lower bound:
% \begin{align*}
%     R_{n} = \max_{1\le j \le d}\mathbb{E}\left[\frac{(e_{j}^{\top} X_{1})^2}{ a_{j}^2} \mathbf{1}\{(e_{j}^{\top} X_{1})^{2} > a_{j}^{2} n\}\right] + n^2\|\mathbb{E}[Y_1]\|_\infty^2 \ge C\mathbb{E}\left[\frac{(e_{j}^{\top} X_{1})^{2+\delta}}{ n^{\delta/2}\sigma_{\max}^{2+\delta}} \right].
% \end{align*}
Thus, the claim follows.
\end{proof}
\subsection{Technical Lemmas}
\begin{lemma}\label{lemma:truncation}
    For a centered random variable $X$ with $\Eb[X^2]=\sigma^2$, let
    \begin{equation}
        a = \sup\Set{b\geq 0: \Eb\left[X^2\mathbf{1}(X^2\leq b^2n)\right]\geq b^2}.
    \end{equation} Then, $a\leq\sigma$ and $a$ is the largest solution of the equation of
    \begin{eqnarray*}
        \Eb\left[X^2\mathbf{1}(X^2\leq a^2n)\right] = a^2.
    \end{eqnarray*}
\end{lemma}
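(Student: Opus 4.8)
The plan is to analyze the function $b \mapsto \E[X^2 \mathbf{1}(X^2 \le b^2 n)]$ and compare it to the linear function $b \mapsto b^2$ (or rather, reparametrize to work with $c = b^2$ and compare $c \mapsto \E[X^2\mathbf{1}(X^2 \le cn)]$ to the identity $c \mapsto c$). First I would record the elementary properties of $f(c) := \E[X^2 \mathbf{1}(X^2 \le cn)]$: it is nondecreasing in $c$, it is bounded above by $\sigma^2$ for every $c$, it satisfies $f(0) \ge 0$ (in fact $f(0) = 0$ if $\Pb(X = 0)$ contributes nothing, but more importantly $f(c) \ge 0$ always), and $f(c) \to \sigma^2$ as $c \to \infty$ by monotone convergence. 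The set defining $a$ is $S = \{c \ge 0 : f(c) \ge c\}$ after the substitution; note $0 \in S$ since $f(0) \ge 0$, so $S$ is nonempty, and $a^2 = \sup S$.

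Next I would establish $a \le \sigma$. Since $f(c) \le \sigma^2$ for all $c$, any $c \in S$ must satisfy $c \le f(c) \le \sigma^2$, hence $S \subseteq [0, \sigma^2]$ and therefore $a^2 = \sup S \le \sigma^2$, giving $a \le \sigma$. This also shows $S$ is bounded, so the supremum is finite.

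Then I would show $a^2$ actually solves $f(a^2) = a^2$ and is the largest such solution. For the largest-solution part: if $c$ satisfies $f(c) = c$ then $c \in S$, so $c \le a^2$; thus $a^2$ dominates every solution. For the equality $f(a^2) = a^2$: take a sequence $c_k \in S$ with $c_k \uparrow a^2$ (possible by definition of supremum; if $a^2 \in S$ directly we can use the constant sequence). I would need a one-sided continuity property of $f$. The clean way is to note that $f$ is left-continuous: $f(c) = \E[X^2\mathbf{1}(X^2 \le cn)]$ and as $c_k \uparrow c$, the events $\{X^2 \le c_k n\}$ increase to $\{X^2 < cn\}$, so by monotone convergence $f(c_k) \to \E[X^2 \mathbf{1}(X^2 < cn)]$, which may differ from $f(c)$ only by the atom at $X^2 = cn$. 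To handle this cleanly I would argue: from $f(c_k) \ge c_k$ and letting $k \to \infty$ we get $\E[X^2\mathbf{1}(X^2 < a^2 n)] \ge a^2$, hence a fortiori $f(a^2) \ge a^2$, so $a^2 \in S$ and the supremum is attained. For the reverse inequality $f(a^2) \le a^2$: if $f(a^2) > a^2$, then by right-continuity of $f$ at $a^2$ (as $c_k \downarrow a^2$, the events $\{X^2 \le c_k n\}$ decrease to $\{X^2 \le a^2 n\}$, so $f(c_k) \to f(a^2)$ by dominated convergence using the $\sigma^2$ bound) there would exist $c > a^2$ with $f(c) > c$, contradicting $a^2 = \sup S$. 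Hence $f(a^2) = a^2$.

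The main obstacle is the careful bookkeeping around the atom of $X^2$ at the threshold value $a^2 n$, i.e.\ distinguishing strict versus non-strict inequalities inside the indicator; the argument above sidesteps it by using left-continuity to get $a^2 \in S$ and right-continuity to get the matching upper bound, so that no delicate atom analysis is actually required. One should also note the cosmetic discrepancy between the definition here (using $b^2 n$ and requiring $\ge b^2$) and the definition of $a_j^2$ in the main text (using $bn$ and $\ge b$); under $c = b^2$ these coincide, and I would remark on this at the start so the lemma applies as used in \Cref{cor:berry-esseen}.
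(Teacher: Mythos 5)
Your argument is correct, and it is worth noting at the outset that the paper itself supplies no proof here: it simply points to Lemma~1.3 of \citet{bentkus1996berry}. So what you have written is genuine content that the paper delegates, and your reparametrization $c=b^2$, $f(c)=\Eb[X^2\mathbf{1}(X^2\le cn)]$, together with the observation that $c\in S\implies c\le f(c)\le\sigma^2$, reproduces the standard elementary proof that one would expect to find in the cited reference. Your remark reconciling the two notations (the lemma's $b^2n,\,b^2$ versus the main text's $bn,\,b$) is also apt, since the paper states the lemma in a form superficially different from the definition of $a_j^2$ used in \Cref{cor:berry-esseen}.

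One small simplification: you do not actually need left- or right-continuity of $f$; monotonicity alone closes both directions. For the lower bound, if $c_k\in S$ with $c_k\uparrow a^2$, then $f(a^2)\ge f(c_k)\ge c_k$ directly by monotonicity, and letting $k\to\infty$ gives $f(a^2)\ge a^2$, so $a^2\in S$ without any appeal to monotone convergence or to the distinction between $\{X^2<a^2n\}$ and $\{X^2\le a^2n\}$. For the upper bound, if $f(a^2)=a^2+\delta$ with $\delta>0$, then for $0<\epsilon\le\delta$ monotonicity gives $f(a^2+\epsilon)\ge f(a^2)=a^2+\delta\ge a^2+\epsilon$, so $a^2+\epsilon\in S$, contradicting $a^2=\sup S$; no right-continuity is needed. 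The atom-at-threshold bookkeeping you flag as the main obstacle therefore evaporates entirely. That said, the continuity arguments you give are correct as stated, so this is purely a matter of economy, not of validity.
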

\begin{proof}[\bfseries{Proof of \Cref{lemma:truncation}}]
    See Lemma~1.3 of \cite{bentkus1996berry}.
\end{proof}

\begin{lemma}\label{lemma:remainder1}
Let $T_n, T_n^Y \in \mathbb{R}^d$ be defined as \eqref{def:Tn} and \eqref{def:TnY}. Then 
    \begin{align*}
        \sup_{t\in\mathbb{R}}\, |\mathbb{P}(\|T_{n}\|_\infty \leq t)-\mathbb{P}(\|T_{n}^Y\|_{\infty} \leq t)| \le n \mathbb{P}\left(\max _{1 \leq j \leq d} \frac{(e_{j}^{\top} X_{1})^{2}}{a_{j}^{2}}>n\right).
    \end{align*}
\end{lemma}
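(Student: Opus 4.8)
The plan is to observe that $T_n$ and $T_n^Y$ are \emph{identical} events on the complement of a small ``bad'' event, and to bound the probability of that bad event by a union-bound–style argument. Concretely, for each coordinate $j$, the truncated vector $Y_i$ satisfies $e_j^\top Y_i = (e_j^\top X_i)/(a_j n^{1/2})$ exactly whenever $(e_j^\top X_i)^2 \le a_j^2 n$, i.e. the indicator in \eqref{def:truncted_rv} equals $1$. Since the self-normalized statistic $e_j^\top T_n$ is scale-invariant in the $j$-th coordinate of the $X_i$'s (multiplying $e_j^\top X_i$ by the constant $1/(a_j n^{1/2})$ cancels in numerator and denominator of \eqref{def:Tn}), we get $e_j^\top T_n = e_j^\top T_n^Y$ for every $j$ on the event
\[
  \mathcal{E} := \bigcap_{i=1}^n \bigcap_{j=1}^d \Set{(e_j^\top X_i)^2 \le a_j^2 n} = \Set{\max_{1\le i\le n}\max_{1\le j\le d} \frac{(e_j^\top X_i)^2}{a_j^2} \le n}.
\]
On $\mathcal{E}$, therefore, $\|T_n\|_\infty = \|T_n^Y\|_\infty$, so for any $t$ the events $\{\|T_n\|_\infty \le t\}$ and $\{\|T_n^Y\|_\infty \le t\}$ differ only within $\mathcal{E}^c$.

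Next I would turn this into the stated bound. For any fixed $t$, writing $A = \{\|T_n\|_\infty \le t\}$ and $B = \{\|T_n^Y\|_\infty \le t\}$, the coincidence on $\mathcal{E}$ gives $A \cap \mathcal{E} = B \cap \mathcal{E}$, hence
\[
  \mathbb{P}(A) - \mathbb{P}(B) = \mathbb{P}(A\cap\mathcal{E}^c) - \mathbb{P}(B\cap\mathcal{E}^c),
\]
so $|\mathbb{P}(A) - \mathbb{P}(B)| \le \max\{\mathbb{P}(A\cap\mathcal{E}^c),\, \mathbb{P}(B\cap\mathcal{E}^c)\} \le \mathbb{P}(\mathcal{E}^c)$. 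Taking the supremum over $t$ yields
\[
  \sup_{t\in\mathbb{R}} |\mathbb{P}(\|T_n\|_\infty\le t) - \mathbb{P}(\|T_n^Y\|_\infty\le t)| \le \mathbb{P}(\mathcal{E}^c) = \mathbb{P}\!\left(\max_{1\le i\le n}\max_{1\le j\le d}\frac{(e_j^\top X_i)^2}{a_j^2} > n\right).
\]
Finally, a union bound over $i=1,\ldots,n$ together with the fact that the $X_i$ are i.i.d. gives $\mathbb{P}(\mathcal{E}^c) \le \sum_{i=1}^n \mathbb{P}(\max_j (e_j^\top X_i)^2/a_j^2 > n) = n\,\mathbb{P}(\max_{1\le j\le d}(e_j^\top X_1)^2/a_j^2 > n)$, which is exactly the claimed bound.

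There is essentially no hard step here: the only thing to be careful about is the scale-invariance argument — one must note that $a_j > 0$ (guaranteed by non-degeneracy, cf. \Cref{lemma:truncation}) so that the rescaling is legitimate, and that on $\mathcal{E}$ the denominator $\sum_i (e_j^\top Y_i)^2$ is a positive multiple of $\sum_i (e_j^\top X_i)^2$ coordinate-by-coordinate, so the ratio defining $e_j^\top T_n^Y$ literally equals $e_j^\top T_n$. (If some $\sum_i (e_j^\top X_i)^2 = 0$ the statistic is conventionally $0$ in both, so the identity still holds.) The mild subtlety worth a sentence is that the coincidence is coordinatewise and hence passes to the $\ell_\infty$ norm and then to the distribution functions via the two-sided bound above; everything else is a routine union bound.
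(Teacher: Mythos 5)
Your proof is correct and follows essentially the same route as the paper's: identify the event on which the truncation indicators all hold, observe that $T_n = T_n^Y$ there (which you justify carefully via scale-invariance, whereas the paper leaves this implicit), bound the KS distance by the probability of the complement, and finish with a union bound over $i$. The extra care you take with the scale-invariance and degenerate cases is a welcome clarification, but the argument is the same.
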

\begin{proof}[\bfseries{Proof of \Cref{lemma:remainder1}}] We denote
\begin{eqnarray*}
    \delta_1 := \sup_{t\in\Real}\Abs{\Pb(\|T_{n}\|_\infty \leq t)-\mathbb{P}(\|T_{n}^Y\|_{\infty} \leq t)}.
\end{eqnarray*} Furthermore, we define
\begin{eqnarray*}
    \Ec := \Set{e_j^\top X_i = e_j^\top Y_i \mbox{ for all }1\leq i\leq n\mbox{ and }1\leq j\leq p.}.
\end{eqnarray*} It follows from the definition that $T_n = T_n^Y$ on $\Ec$. Hence,
\begin{eqnarray*}
    \delta_1\leq \sup_{t\in\Real}\Abs{\Pb(\set{\|T_{n}\|_\infty \leq t}\cap\Ec^\complement)-\mathbb{P}(\set{\|T_{n}^Y\|_{\infty} \leq t}\cap\Ec^\complement)}\leq \Pb(\Ec^\complement).
\end{eqnarray*} With a union bound, it is immediate that 
\begin{align*}
    \mathbb{P}(\mathcal{E}^\complement) = \mathbb{P}\left(\exists\, i \in \{1, \ldots, n\}\, : \, \max _{1 \leq j \leq d} \frac{(e_{j}^{\top} X_i)^{2}}{a_{j}^{2}}> n\right) \le \sum_{i=1}^n \mathbb{P}\left(\max _{1 \leq j \leq d} \frac{(e_{j}^{\top} X_i)^{2}}{a_{j}^{2}}> n\right).
\end{align*}
Since $X_i$ is identically distributed, we conclude the claim.
\end{proof}
\begin{lemma}\label{lemma:remainder2}
    Let $T_n^Y \in \mathbb{R}^d$ be defined as \eqref{def:TnY} and $\widetilde{\Yv} = \sum_{i=1}^n \widetilde{Y}_i$ where $\widetilde{Y}_i$ is defined as \eqref{def:Ytilde}. Then there exists a universal constant $C > 0$ such that 
    \begin{align*}
        \sup_{t\in\mathbb{R}}\, |\mathbb{P}(\|T_{n}^Y\|_\infty \leq t)-\mathbb{P}(\|\widetilde{\Yv}\|_{\infty} \leq t)|\le C\log(ed)n \mathbb{E}\|Y_1\|_\infty^3.
    \end{align*}
\end{lemma}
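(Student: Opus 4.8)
The plan is to reduce the claim to a single tail bound and then to control that tail by a second‑moment estimate.

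\emph{Reduction to a bad event.} Set $V_j^2:=\sum_{i=1}^{n}(e_j^\top Y_i)^2$. The defining relation for $\eta_{j,n}$ in \eqref{def:Ytilde} shows $1+\eta_{j,n}=V_j^2$ for every $j$, so on the event $\mathcal{G}:=\{\,1/4\le V_j^2\le 7/4\text{ for all }1\le j\le d\,\}$ we have $g(1+\eta_{j,n})=g(V_j^2)=V_j^{-1}$, hence $e_j^\top\widetilde{\Yv}=V_j^{-1}\sum_{i=1}^{n}e_j^\top Y_i$ and therefore $|e_j^\top\widetilde{\Yv}|=\bigl|\sum_i e_j^\top Y_i\bigr|/V_j=e_j^\top T_n^Y$ for all $j$; thus $\|\widetilde{\Yv}\|_\infty=\|T_n^Y\|_\infty$ on $\mathcal{G}$. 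Splitting $\mathbb{P}(\|T_n^Y\|_\infty\le t)$ and $\mathbb{P}(\|\widetilde{\Yv}\|_\infty\le t)$ over $\mathcal{G}$ and $\mathcal{G}^\complement$ exactly as in the proof of Lemma~\ref{lemma:remainder1}---the parts on $\mathcal{G}$ cancel---gives
\[
\sup_{t\in\mathbb{R}}\bigl|\mathbb{P}(\|T_n^Y\|_\infty\le t)-\mathbb{P}(\|\widetilde{\Yv}\|_\infty\le t)\bigr|\ \le\ \mathbb{P}(\mathcal{G}^\complement)\ =\ \mathbb{P}\Bigl(\max_{1\le j\le d}|\eta_{j,n}|>3/4\Bigr).
\]

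\emph{Second‑moment bound via Nemirovski.} Write $\xi_{ij}:=(e_j^\top Y_i)^2-1/n$ and $\xi_i:=(\xi_{i1},\dots,\xi_{id})\in\mathbb{R}^d$, so that $\eta_{j,n}=\sum_{i=1}^{n}\xi_{ij}$ and the $\xi_i$ are i.i.d.\ and mean zero because $\mathbb{E}[(e_j^\top Y_i)^2]=1/n$. By Markov's inequality, $\mathbb{P}(\max_j|\eta_{j,n}|>3/4)\le \tfrac{16}{9}\,\mathbb{E}\bigl\|\sum_{i=1}^{n}\xi_i\bigr\|_\infty^2$. Since the type‑$2$ constant of $\ell_\infty^d$ is of order $\sqrt{\log(ed)}$, Nemirovski's inequality yields, for an absolute constant $C$,
\[
\mathbb{E}\Bigl\|\sum_{i=1}^{n}\xi_i\Bigr\|_\infty^2\ \le\ C\log(ed)\sum_{i=1}^{n}\mathbb{E}\|\xi_i\|_\infty^2\ =\ C\log(ed)\,n\,\mathbb{E}\|\xi_1\|_\infty^2,
\]
where using $\log(ed)$ in place of $\log d$ absorbs the trivial cases $d\in\{1,2\}$. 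It then remains to compare $\mathbb{E}\|\xi_1\|_\infty^2$ with $\mathbb{E}\|Y_1\|_\infty^3$: since $\|Y_1\|_\infty\le 1$ a.s., $|(e_j^\top Y_1)^2-1/n|\le\|Y_1\|_\infty^2\vee n^{-1}$ for every $j$, whence $\|\xi_1\|_\infty^2\le\|Y_1\|_\infty^4+n^{-2}\le\|Y_1\|_\infty^3+n^{-2}$; and as $\mathbb{E}\|Y_1\|_\infty^2\ge\max_j\mathbb{E}[(e_j^\top Y_1)^2]=n^{-1}$, Jensen's inequality gives $\mathbb{E}\|Y_1\|_\infty^3\ge(\mathbb{E}\|Y_1\|_\infty^2)^{3/2}\ge n^{-3/2}\ge n^{-2}$, so $\mathbb{E}\|\xi_1\|_\infty^2\le 2\,\mathbb{E}\|Y_1\|_\infty^3$. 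Chaining the three displays gives the lemma with, say, $C'=\tfrac{32}{9}C$.

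The one step I expect to require real care is the maximal inequality: a naive union bound over the $d$ coordinates would replace $\log(ed)$ by $d$ and destroy the estimate, so one must appeal to a Nemirovski/type‑$2$ inequality (equivalently, a suitable high‑moment bound for $\max_j\eta_{j,n}$) to obtain only a logarithmic price for the maximum. The reduction in the first step, and the moment manipulations in the second, are elementary once one uses the two facts $\|Y_i\|_\infty\le 1$ a.s.\ and $\mathbb{E}[(e_j^\top Y_i)^2]=1/n$ recorded just before the theorem.
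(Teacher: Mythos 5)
Your proof is correct, and the key step differs from the paper's in a meaningful way. Both proofs start with the same reduction: $T_n^Y=\widetilde{\Yv}$ on the event $\{1+\eta_{j,n}\in[1/4,7/4]\ \forall j\}$, so the Kolmogorov distance is bounded by $\mathbb{P}(\max_j|\eta_{j,n}|>c)$ for some $c\in\{1/2,3/4\}$, and then by Markov/Chebyshev by $\mathbb{E}[\max_j|\sum_i(e_j^\top Y_i)^2-1|^2]$. Where you diverge is in bounding this second moment of a maximum: the paper routes it through Lemma~\ref{lemma:moment-bounds-for-Ustats}, which invokes the Efron--Stein/self-bounding concentration inequality (Theorem~11.1 of \citet{Boucheron2013concentration}) and the maximal inequality of \citet{kuchibhotla2022least}, producing the bound $n\log(ed)\Eb\|Y_1\|_\infty^3+(n\log^{1/2}(ed)\Eb\|Y_1\|_\infty^3)^{4/3}$ and then silently drops the second term (which is only legitimate under the standing assumption $n\log^5(ed)\Eb\|Y_1\|_\infty^3\le1$ introduced at the start of the proof of Theorem~\ref{thm:berry-esseen}). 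You instead apply Nemirovski's inequality directly to the sum of the i.i.d.\ mean-zero vectors $\xi_i=((e_j^\top Y_i)^2-1/n)_{1\le j\le d}$, obtaining $\mathbb{E}\|\sum_i\xi_i\|_\infty^2\le C\log(ed)\,n\,\mathbb{E}\|\xi_1\|_\infty^2$, and then a short moment manipulation (using $\|Y_1\|_\infty\le1$ a.s., $\mathbb{E}(e_j^\top Y_1)^2=1/n$, and Lyapunov) shows $\mathbb{E}\|\xi_1\|_\infty^2\le2\mathbb{E}\|Y_1\|_\infty^3$. Your route is more elementary and self-contained, it avoids the auxiliary assumption needed to discard the extra term, and it gives the stated bound unconditionally; the paper's machinery in Lemma~\ref{lemma:moment-bounds-for-Ustats} is still needed elsewhere (e.g., in Lemma~\ref{lemma:u-statistics}), but for this particular lemma your argument is a genuine simplification.
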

\begin{proof}[\bfseries{Proof of \Cref{lemma:remainder2}}]
By the definition of the function $g$, we have $T_{n}^Y=\widetilde{\Yv}$ almost surely on the event where 
\begin{align*}
    \left\{|1+\eta_{j,n}| \in \left[\frac{1}{4}, \frac{7}{4}\right] \mbox{ for all $1 \le j \le d$}\right\}.
\end{align*}
It thus suffices to control the complement of this event as
% % \citet{bentkus1996berry} inserts another term because they analyze studentized statistics (i.e., standard deviation is centered by the sample mean). But in our case, we can simply consider 
% \begin{align*}
%     |\eta_{j,n}| \le \frac{1}{2} \quad \mbox{for all $1 \le j \le d$},
% \end{align*}
% which ensures two statistics are identical. Hence 
\begin{align*}
    \sup _{t \in \mathbb{R}}|\mathbb{P}(T_{n}^Y \leq t)-\mathbb{P}(\widetilde{\Yv}\leq t)| &\le \mathbb{P}\left( \exists j \in \{1, 2, \ldots, d\} \, : \, |1+\eta_{j,n}| \notin \left[\frac{1}{4}, \frac{7}{4}\right] \right) \\
    &\le \mathbb{P}\left( \max_{1\le j \le d}\, |\eta_{j,n}| > \frac{1}{2}\right) \\
    &= \mathbb{P}\left( \max_{1\le j \le d}\, |\sum_{i=1}^{n}(e_{j}^{\top} Y_{i})^{2}-1| > \frac{1}{2}\right) \\
    &\le 4\mathbb{E}\left[ \max_{1\le j \le d}\, |\sum_{i=1}^{n}(e_{j}^{\top} Y_{i})^{2}-1|^2 \right],
\end{align*} where the last inequality is Chebyshev inequality. The last term is controlled by \Cref{lemma:moment-bounds-for-Ustats} as:
\begin{align*}
    \mathbb{E}\left[ \max_{1\le j \le d}\, |\sum_{i=1}^{n}(e_{j}^{\top} Y_{i})^{2}-1|^2 \right] \lesssim n\log(ed)\left(\Eb[\norm{Y_1}_\infty^3]\right).
\end{align*}
Thus the claim is concluded. 
\end{proof}
\begin{lemma}\label{lemma:remainder3}
      Let $Z =\sum_{i=1}^n Z_i$ and $Z^X=\sum_{i=1}^n Z^X_i$ be centered Gaussian random vectors in $\mathbb{R}^d$ such that ${\rm Var}(Y_i)={\rm Var}(Z_i)$ and $\mathbb{E}[Z^X Z^{X\top}] = \Omega^X$ where $\Omega^X$ is a correlation matrix of $X$. Define
      \begin{eqnarray*}
          R_n := \max_{1\le j \le d}\mathbb{E}\left[\frac{(e_{j}^{\top} X_{1})^2}{ a_{j}^2} \mathbf{1}\{(e_{j}^{\top} X_{1})^{2} > a_{j}^{2} n\}\right] + n^2\|\mathbb{E}[Y_1]\|_\infty^2.
      \end{eqnarray*} Then, there exists a universal constant $C>0$ such that
      \begin{eqnarray*}
          \sup_{t \in \mathbb{R}}\, |\mathbb{P}(\|Z\|_\infty \le t) - \mathbb{P}(\|Z^X\|_\infty \le t)| \le C\log(ed)R_n^{1/2}.
      \end{eqnarray*} Furthermore, let $\lambda_{\min}(\Omega^X)$ be the smallest eigenvalue of $\Omega^X$. Then, we have an improved upper bound as
    \begin{align*}
        \sup_{t \in \mathbb{R}}\, |\mathbb{P}(\|Z\|_\infty \le t) - \mathbb{P}(\|Z^X\|_\infty \le t)| \le \frac{C}{\lambda_{\min}(\Omega^X)}\log(ed) R_n\Set{\log(R_n^{-1})\vee1} ,
    \end{align*} where $C$ represents a possibly different universal constant.
\end{lemma}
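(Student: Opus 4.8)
The two Gaussian vectors $Z$ and $Z^X$ differ only in their covariance matrices: $\Var(Z_i) = \Var(Y_i)$ summed over $i$ gives $\Omega := n\Var(Y_1)$, whereas $\Omega^X = \mathrm{Corr}(X_1)$. The plan is to first quantify the entrywise discrepancy $\|\Omega - \Omega^X\|_\infty$ in terms of $R_n$, and then feed this into a Gaussian comparison inequality for the $\ell^\infty$ norm. For the first part, write $e_j^\top \Omega e_k = n\,\Cov(e_j^\top Y_1, e_k^\top Y_1) = n\,\E[(e_j^\top Y_1)(e_k^\top Y_1)] - n (e_j^\top \E Y_1)(e_k^\top \E Y_1)$. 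The second term is bounded by $n^2\|\E Y_1\|_\infty^2$, which is exactly the second piece of $R_n$. For the first term, compare $n\,\E[(e_j^\top Y_1)(e_k^\top Y_1)]$ with $e_j^\top \Omega^X e_k = \E[(e_j^\top X_1)(e_k^\top X_1)]/(\sigma_j\sigma_k)$: the difference is controlled by replacing $Y_1$ by its untruncated/unnormalized version (recall $e_j^\top Y_1 = e_j^\top X_1/(a_j n^{1/2}) \mathbf{1}\{\cdot\}$ and $a_j \le \sigma_j$), so the discrepancy in each coordinate $j$ is governed by the truncation remainder $\E[(e_j^\top X_1)^2/a_j^2 \,\mathbf{1}\{(e_j^\top X_1)^2 > a_j^2 n\}]$ together with the ratio $a_j^2/\sigma_j^2$ being close to $1$; Cauchy–Schwarz across coordinates $j,k$ then yields $\|\Omega - \Omega^X\|_\infty \lesssim R_n$. (Care is needed with the $a_j$ vs.\ $\sigma_j$ normalization mismatch — one uses Lemma~\ref{lemma:truncation} to control $1 - a_j^2/\sigma_j^2$ by the same truncation remainder.)

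For the second part, I would invoke a standard Gaussian-to-Gaussian comparison bound for maxima. The crude bound $\sup_t |\Pb(\|Z\|_\infty \le t) - \Pb(\|Z^X\|_\infty \le t)| \le C \log(ed)\, \|\Omega - \Omega^X\|_\infty^{1/2}$ follows directly from the combination of a Slepian/Stein interpolation estimate (comparing $\E[H_{\varepsilon,t}(Z)]$ with $\E[H_{\varepsilon,t}(Z^X)]$ via $\frac{d}{ds}\E[H_{\varepsilon,t}(\sqrt{s}\,Z + \sqrt{1-s}\,Z^X)]$, whose integrand involves $\langle \nabla^2 H_{\varepsilon,t}, \Omega - \Omega^X\rangle$ bounded by $\varepsilon^{-2}\log(d)\|\Omega-\Omega^X\|_\infty$) together with Nazarov anti-concentration contributing the $\varepsilon \log(ed)$ smoothing cost, then optimizing $\varepsilon \asymp (\|\Omega-\Omega^X\|_\infty \sqrt{\log d})^{1/2}$. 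Combined with $\|\Omega - \Omega^X\|_\infty \lesssim R_n$ this gives the first claimed bound $C\log(ed)R_n^{1/2}$. This type of estimate is exactly Theorem~2 of \citet{chernozhukov2017detailed} (or Proposition~\ref{prop:B.1}-type comparison results), so I would cite it rather than reprove it.

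For the improved bound under $\lambda_{\min}(\Omega^X) > 0$, the point is that when both covariance matrices have diagonal entries close to $1$ and the minimum eigenvalue of $\Omega^X$ is bounded below, one can use a sharper Gaussian comparison that avoids the lossy square-root: namely $\sup_t|\Pb(\|Z\|_\infty\le t)-\Pb(\|Z^X\|_\infty\le t)| \lesssim \lambda_{\min}^{-1}(\Omega^X)\,\|\Omega-\Omega^X\|_\infty \log(ed)\{\log(\|\Omega-\Omega^X\|_\infty^{-1})\vee 1\}$. This is the content of results such as Theorem~3.2 of \citet{chernozhuokov2022improved} (or Lemma~A.2 there), where the non-degeneracy of $\Omega^X$ lets one bound $t$-derivatives of $\Pb(\|Z^X\|_\infty \le t)$ uniformly and run a direct (non-smoothed) interpolation argument. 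Substituting $\|\Omega-\Omega^X\|_\infty \lesssim R_n$ yields the stated bound. The main obstacle is the first part: being careful that the $a_j$-normalization in the definition of $Y_1$ (as opposed to $\sigma_j$-normalization) does not introduce uncontrolled error — specifically showing $|a_j^2/\sigma_j^2 - 1| \le \E[(e_j^\top X_1/\sigma_j)^2\mathbf{1}\{(e_j^\top X_1)^2 > a_j^2 n\}]$ via Lemma~\ref{lemma:truncation}, and then propagating this through the off-diagonal entries via Cauchy–Schwarz so that everything collapses into $R_n$ up to an absolute constant.
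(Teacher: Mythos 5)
Your plan mirrors the paper's proof at the structural level: reduce to the entrywise covariance discrepancy $\varpi_n := \max_{j,k} |e_j^\top(\Omega-\Omega^X)e_k|$, feed it through two Gaussian comparison inequalities (a square-root bound for the first claim and a $\lambda_{\min}^{-1}\varpi_n\log(1/\varpi_n)$ bound for the second), and then show $\varpi_n\lesssim R_n$. The paper happens to cite Proposition~2.1 of \citet{chernozhuokov2022improved} for the square-root bound and Theorem~2.3 of \citet{lopes2022central} for the eigenvalue-enhanced one, but these are interchangeable with the references you name. The difference you should be careful about is the direction of the $a_j$-vs.-$\sigma_j$ ratio. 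The covariance entries being compared are $\E[e_j^\top X_1\, e_k^\top X_1]/(n\sigma_j\sigma_k)$ and $\E[e_j^\top X_1\, e_k^\top X_1]/(n a_j a_k)$ (on the good truncation event), so what you must control is $\bigl|1-\tfrac{\sigma_j\sigma_k}{a_ja_k}\bigr|$, whose relevant single-index quantity is $|\sigma_j^2/a_j^2-1|$, not the $|a_j^2/\sigma_j^2-1|$ you bound by the truncation remainder. Because $a_j\le\sigma_j$, one can have $|a_j^2/\sigma_j^2-1|<1$ while $\sigma_j^2/a_j^2-1$ is arbitrarily large, so the inequality you state does not directly close the gap. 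The paper handles this with a two-case argument: if $a_j^2<\sigma_j^2/2$ then Lemma~\ref{lemma:truncation} forces the truncation remainder, hence $R_n$, to exceed $1$, making the claimed bound trivial; otherwise $a_j^2\ge\sigma_j^2/2$, so $|\sigma_j^2/a_j^2-1|\le 2|a_j^2/\sigma_j^2-1|$ and your bound then applies. You should insert this case split explicitly; with it, your plan matches the paper's proof in all essential respects (your Cauchy--Schwarz step across $j,k$ and the paper's observation that the maximum is attained on the diagonal both yield the same reduction to $j=k$).
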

\begin{proof}[\bfseries{Proof of \Cref{lemma:remainder3}}]
The results are consequences of two sharp Gaussian comparison inequalities. We recall that the diagonal entries of $\Omega^X$ and $\Omega:={\rm Var}(Z)$ are 1's. Define
\begin{eqnarray*}
    \varpi_n := \max_{1\leq j,k\leq d}\Abs{e_j^\top\left(\Omega-\Omega^X\right)e_k}.
\end{eqnarray*} Proposition~2.1 of \cite{chernozhuokov2022improved} shows that
\begin{eqnarray*}
    \sup_{t \in \mathbb{R}}\, |\mathbb{P}(\|Z\|_\infty \le t) - \mathbb{P}(\|Z^X\|_\infty \le t)| \le C\log(ed)\varpi_n^{1/2},
\end{eqnarray*} for some universal constant $C>0$. Meanwhile, Theorem 2.3 of \citet{lopes2022central} states that
\begin{eqnarray*}
    \sup_{t \in \mathbb{R}}\, |\mathbb{P}(\|Z\|_\infty \le t) - \mathbb{P}(\|Z^X\|_\infty \le t)| \le C\lambda^{-1}_{\min}(\Omega^X)\log(ed)\varpi_n\Set{\log(1/\varpi_n)\vee1},
\end{eqnarray*} where $C$ is a universal constant. Hence, in order to employ two bounds, it suffices to control $\varpi_n$. To this end, we first note that
\begin{align}\label{eq:pf.lem5:1}
    \varpi_n &= n\max_{1\leq j, k \leq p}\Abs{e_j^\top\left({\rm Var}(Z_1)-{\rm Var}(Z_1^X)\right)e_k}\nonumber\\
    &=n\max_{1\leq j,k\leq d}\Abs{\mathbb{E}[e_j^\top Z^X_1 e_k^\top Z^X_1] - \mathbb{E}[e_j^\top Y_1e_k^\top Y_1] + \mathbb{E}[e_j^\top Y_1] \E[e_k^\top Y_1]} \nonumber\\
    & \le n\max_{1\leq j,k\leq d}\left|\mathbb{E}[e_j^\top Z^X_1 e_k^\top Z^X_1] - \mathbb{E}[e_j^\top Y_1e_k^\top Y_1]\right| + n\max_{1\leq j,k\leq d}\Abs{\mathbb{E}[e_j^\top Y_1]\E[e_k^\top Y_1]}\nonumber\\
    &=n\max_{1\leq j,k\leq d}\left|\mathbb{E}[e_j^\top Z^X_1 e_k^\top Z^X_1] - \mathbb{E}[e_j^\top Y_1e_k^\top Y_1]\right| + n\|\E[Y_1]\|_\infty^2.
    %&=\left|\mathbb{E}\left[\frac{e_j^\top X_1 e_k^\top X_1}{n\sigma_j\sigma_k} -\frac{e_{j}^{\top} X_{1}e_{k}^{\top} X_{1}}{n a_{j} a_{k}} \mathbf{1}\{(e_{j}^{\top} X_{1})^{2} \leq a_{j}^{2} n\} \mathbf{1}\{(e_{k}^{\top} X_{1})^{2} \leq a_{k}^{2} n\}\right]\right|+ |\mathbb{E}[e_j^\top Y_1]| |\E[e_k^\top Y_1]|.
\end{align}
To analyze the leading term on the last display, we define 
\begin{eqnarray*}
    \mathcal{E}_{j,k}=\Set{(e_{j}^{\top} X_{1})^{2} \leq a_{j}^{2} n} \cap \Set{(e_{k}^{\top} X_{1})^{2} \leq a_{k}^{2} n}\quad\mbox{for}\quad 1\leq j,k\leq d.
\end{eqnarray*} Note that one has $e_j^\top Y_1=e_j^\top X_1/(n^{1/2}a_j)$ and $e_k^\top Y_1=e_k^\top X_1/(n^{1/2}a_k)$ on $\Ec_{jk}$. We observe
\begin{align}\label{eq:pf.lem5:2}
    &|\mathbb{E}[e_j^\top Z^X_1 e_k^\top Z^X_1] - \mathbb{E}[e_j^\top Y_1e_k^\top Y_1]| \nonumber\\
    &\qquad = |\mathbb{E}[(e_j^\top Z^X_1 e_k^\top Z^X_1-e_j^\top Y_1e_k^\top Y_1) \mathbf{1}(\mathcal{E}_{j,k})]| + |\mathbb{E}[(e_j^\top Z^X_1 e_k^\top Z^X_1-e_j^\top Y_1e_k^\top Y_1)\mathbf{1}(\mathcal{E}_{j,k}^\complement)]|.
\end{align} The first term is bounded by
\begin{align*}
    \left|\mathbb{E}\left[\frac{e_j^\top X_1 e_k^\top X_1}{n\sigma_j\sigma_k} -\frac{e_{j}^{\top} X_{1}e_{k}^{\top} X_{1}}{n a_{j} a_{k}}\right]\right| = \frac{|a_ja_k-\sigma_j\sigma_k|}{na_ja_k\sigma_j\sigma_k}|\mathbb{E}[e_j^\top X_1 e_k^\top X_1]| \le n^{-1}\left|1-\frac{\sigma_j\sigma_k}{a_ja_k}\right|.
\end{align*}
The last expression is maximized for some $j=k$ over $1\le j,k\le d$. Hence 
\begin{align}\label{eq:pf.lem5:3}
    \max_{1\le j,k\le d}|\mathbb{E}[(e_j^\top Z^X_1 e_k^\top Z^X_1-e_j^\top Y_1e_k^\top Y_1)\,  \mathbf{1}(\mathcal{E}_{j,k})]| \le \max_{1\le j \le d}n^{-1}\left|1-\frac{\sigma_j^2}{a_j^2}\right|.
\end{align}
On the other hand, the second term is bounded by
\begin{align*}
    &\left|\mathbb{E}\left[\left(\frac{e_j^\top X_1 e_k^\top X_1}{n\sigma_j\sigma_k} -\frac{e_{j}^{\top} X_{1}e_{k}^{\top} X_{1}}{n a_{j} a_{k}}\right)(\mathbf{1}\{(e_{k}^{\top} X_{1})^{2} > a_{k}^{2} n\} + \mathbf{1}\{(e_{j}^{\top} X_{1})^{2} > a_{j}^{2} n\})\right]\right| \\
    &\qquad \le \mathbb{E}\left[\left|\frac{e_j^\top X_1 e_k^\top X_1}{n\sigma_j\sigma_k} \right|(\mathbf{1}\{(e_{k}^{\top} X_{1})^{2} > a_{k}^{2} n\} + \mathbf{1}\{(e_{j}^{\top} X_{1})^{2} > a_{j}^{2} n\})\right] \\
    &\qquad\qquad + \mathbb{E}\left[\left|\frac{e_{j}^{\top} X_{1}e_{k}^{\top} X_{1}}{n a_{j} a_{k}} \right|(\mathbf{1}\{(e_{k}^{\top} X_{1})^{2} > a_{k}^{2} n\} + \mathbf{1}\{(e_{j}^{\top} X_{1})^{2} > a_{j}^{2} n\})\right]\\
    &\qquad 
    \le 2\left(1 \vee\frac{\sigma_j\sigma_k}{a_{j} a_{k}}\right)\mathbb{E}\left[\left|\frac{e_{j}^{\top} X_{1}e_{k}^{\top} X_{1}}{n a_{j} a_{k}} \right|(\mathbf{1}\{(e_{k}^{\top} X_{1})^{2} > a_{k}^{2} n\} + \mathbf{1}\{(e_{j}^{\top} X_{1})^{2} > a_{j}^{2} n\})\right].
\end{align*}
Similarly, the last expression is maximized for some $j=k$ over $1\le j,k\le d$. Hence 
\begin{align}\label{eq:pf.lem5:4}
    &\max_{1\le j, k \le d}|\mathbb{E}[(e_j^\top Z^X_1 e_k^\top Z^X_1-e_j^\top Y_1e_k^\top Y_1)\, \mathbf{1}\{ \mathcal{E}^\complement_{j,k}\}]| \nonumber\\
    &\qquad \le  \max_{1\le j \le d}4\left(1 \vee\frac{\sigma_j^2}{a_{j}^2}\right)\mathbb{E}\left[\left|\frac{(e_{j}^{\top} X_{1})^2}{n a_{j}^2} \right|\mathbf{1}\{(e_{j}^{\top} X_{1})^{2} > a_{j}^{2} n\}\right].
\end{align} Combining \eqref{eq:pf.lem5:1}, \eqref{eq:pf.lem5:2}, \eqref{eq:pf.lem5:3}, and \eqref{eq:pf.lem5:4}, we get
\begin{eqnarray*}
    \varpi_n\leq \max_{1\leq j\leq d}\Abs{1-\frac{\sigma_j^2}{a_j^2}}+\max_{1\le j \le d}4\left(1 \vee\frac{\sigma_j^2}{a_{j}^2}\right)\mathbb{E}\left[\left|\frac{(e_{j}^{\top} X_{1})^2}{a_{j}^2} \right|\mathbf{1}\{(e_{j}^{\top} X_{1})^{2} > a_{j}^{2} n\}\right] +n^2\Norm{\Eb Y_1}_\infty^2.
\end{eqnarray*}
\begin{comment}
    
Putting together, we obtain 
\begin{align*}
       \sup_{t \in \mathbb{R}}\, |\mathbb{P}(\|Z\|_\infty \le t) - \mathbb{P}(\|Z'\|_\infty \le t)|&\le \sum_{i=1}^n   \sup_{t \in \mathbb{R}}\, |\mathbb{P}(\|Z_i\|_\infty \le t) - \mathbb{P}(\|Z'_i\|_\infty \le t)|\\
       & \le\frac{C}{\lambda_{\min}(\Omega)}\log(d) \sum_{i=1}^n\log(R_{i,n}^{-1}) R_{i,n}\\
       &\le\frac{C}{\lambda_{\min}(\Omega)}\log(d) n\log(R_{1,n}^{-1}) R_{1,n}
    \end{align*}
where\wc{ I think the additional $n$ factor from the note above has not been considered.}
\begin{align*}
        R_{1, n} := n^{-1}\max_{1\le j \le d}|1-\sigma_j^2/a_j^2| + \max_{1\le j \le d}\left(1 \vee\frac{\sigma_j^2}{a_{j}^2}\right)\mathbb{E}\left[\left|\frac{(e_{j}^{\top} X_{1})^2}{ na_{j}^2} \right|\mathbf{1}\{(e_{j}^{\top} X_{1})^{2} > a_{j}^{2} n\}\right] + n\|\mathbb{E}[Y_1]\|_\infty^2.
    \end{align*}
\end{comment}
To get the desired bound in Lemma~\ref{lemma:remainder3}, consider the case where $\sigma_j^2/2 > a_j^2$. It follows from the definition of $a_j$ that
\begin{align*}
   \sigma^2_j/2 > \E[(e_{j}^{\top} X_{1})^{2} \mathbf{1}\{(e_{j}^{\top} X_{1})^{2} \leq a_{j}^{2} n\}] &= \sigma^2_j\left(1-\frac{1}{2}\E\left[\frac{(e_{j}^{\top} X_{1})^{2}}{\sigma^2_j/2} \mathbf{1}\{(e_{j}^{\top} X_{1})^{2} > a_{j}^{2} n\}\right]\right)\\
   &\ge \sigma^2_j\left(1-\frac{1}{2}\E\left[\frac{(e_{j}^{\top} X_{1})^{2}}{a_j} \mathbf{1}\{(e_{j}^{\top} X_{1})^{2} > a_{j}^{2} n\}\right]\right).
\end{align*} This implies that
\begin{align*}
    R_n \ge \E\left[\frac{(e_{j}^{\top} X_{1})^{2}}{a_j} \mathbf{1}\{(e_{j}^{\top} X_{1})^{2} > a_{j}^{2} n\}\right] > 1.
\end{align*} Hence, Lemma~\ref{lemma:remainder3} follows by taking sufficiently large absolute constant $C>0$. Now, we consider the case $\sigma_j^2/2 \le a_j^2$, which follows as
\begin{align*}
    |1-\sigma_j^2/a_j^2|\le \frac{2|a_j^2 - \sigma_j^2|}{\sigma_j^2} &= \frac{2}{\sigma^2_j}\left|\E[(e_{j}^{\top} X_{1})^{2} \mathbf{1}\{(e_{j}^{\top} X_{1})^{2} \leq a_{j}^{2} n\}]-\E[(e_{j}^{\top} X_{1})^{2}]\right|\\
    &= \frac{2}{\sigma^2_j}\left|\E[(e_{j}^{\top} X_{1})^{2} \mathbf{1}\{(e_{j}^{\top} X_{1})^{2} > a_{j}^{2} n\}]\right|\\
    &\le 2\E\left[\frac{(e_{j}^{\top} X_{1})^{2}}{a_j^2} \mathbf{1}\{(e_{j}^{\top} X_{1})^{2} > a_{j}^{2} n\}\right]
\end{align*}
where the last step follows from the fact that $a_j^2 \le \sigma_j^2$ by \Cref{lemma:truncation}. Hence, the quantity $\varpi_n$ can be further bounded as
\begin{equation*}
    \varpi_n\leq 6\max_{1\le j \le d}\mathbb{E}\left[\left|\frac{(e_{j}^{\top} X_{1})^2}{a_{j}^2} \right|\mathbf{1}\{(e_{j}^{\top} X_{1})^{2} > a_{j}^{2} n\}\right] +n^2\Norm{\Eb Y_1}_\infty^2\leq 6R_n.
\end{equation*} This completes the proof.
\end{proof}
\begin{lemma}\label{lemma:Y_Ytilde}
    Recall $\Yv = \sum_{i=1}^n Y_i$ and $\widetilde{\Yv} = \sum_{i=1}^n \widetilde{Y}_i$ where $Y_i$ and $\widetilde{Y}_i$ are defined in \eqref{def:truncted_rv} and \eqref{def:Ytilde}. For $H = H_{\varepsilon, r} : \mathbb{R}^d \mapsto \mathbb{R}$, defined as \eqref{def:H-smooth}, we let
    \begin{eqnarray}\label{def:hj}
    h_j = \sup_{x}\sup_{\norm{u_1\otimes\cdots\otimes u_j}_\infty\leq1}\Abs{\Ip{\nabla^jH(x),u_1\otimes\cdots\otimes u_j}},
\end{eqnarray} for $j\in\mathbb{N}$. Suppose that $n\norm{\Eb Y_1}_\infty\leq1$ and $n\log^2(ed)\Eb[\norm{Y_1}_\infty^3]\leq1$, then there exists a universal constant $C>0$ such that
    \begin{align*}
        &|\Eb[H(\widetilde\Yv)]-\Eb\left[H(\Yv)\right]|\\
        &\qquad \le C\Big[h_1\left(n\log(ed)\Eb[\norm{Y_1}_\infty^3+n\Norm{\Eb[Y_1]}_\infty\right)+h_2\left(n\log^2(ed)\Eb[\norm{Y_1}_\infty^3] + n^2\Norm{\Eb[Y_1]}_\infty^2\right)\\
        &\qquad\qquad+h_3n\Eb[\Norm{Y_1}_\infty^3]+h_4\left(n\Eb[\Norm{Y_1}_\infty^3]\right)^2\Big].
    \end{align*}
\end{lemma}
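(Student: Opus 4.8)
The plan is to compare $\widetilde{\Yv}$ and $\Yv$ coordinate-wise and then run a single Lindeberg-type swap through the smooth functional $H$. Writing $e_j^\top\widetilde{Y}_i = e_j^\top Y_i\, g(1+\eta_{j,n})$, I would first exploit the fact that $g$ is constant-close to $1$ on the relevant region: since $g$ is infinitely differentiable with bounded derivatives and $g(1)=1$, a Taylor expansion of $g$ around $1$ gives $g(1+\eta_{j,n}) = 1 + \eta_{j,n} g'(1) + O(\eta_{j,n}^2)$, so that $e_j^\top(\widetilde{\Yv}-\Yv)$ decomposes into a term linear in $\eta_{j,n}$ plus a quadratic remainder. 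The key structural point is that $\eta_{j,n} = \sum_{i=1}^n\{(e_j^\top Y_i)^2 - 1/n\}$ is (up to the diagonal-removal) a degenerate U-statistic of the $Y_i$'s; controlling its moments (and the mixed moments appearing after multiplying by $e_j^\top Y_i$) is exactly what \Cref{lemma:moment-bounds-for-Ustats} supplies, and this is where the $n\log(ed)\Eb\|Y_1\|_\infty^3$ type quantities and the factors $\Norm{\Eb[Y_1]}_\infty$ enter, since $\Eb[e_j^\top Y_i]$ need not vanish after truncation.

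Concretely, I would write $\widetilde{\Yv} = \Yv + R$, where $e_j^\top R = e_j^\top\Yv \cdot (g(1+\eta_{j,n})-1)$, and then Taylor-expand $H$ (not $g$) along the segment from $\Yv$ to $\widetilde{\Yv}$: $H(\widetilde{\Yv}) - H(\Yv) = \langle\nabla H(\Yv),R\rangle + \tfrac12\langle\nabla^2 H(\xi),R^{\otimes 2}\rangle$ for some intermediate $\xi$, or push to higher order as needed to match the four terms $h_1,\dots,h_4$ in the statement. Each term $\langle\nabla^k H(\cdot),R^{\otimes k}\rangle$ is bounded by $h_k \Eb\|R\|_\infty^k$ (using the definition of $h_k$ and that $\|R\|_\infty$ is the relevant tensor-norm quantity). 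So the whole problem reduces to bounding $\Eb\|R\|_\infty^k$ for $k=1,\dots,4$, i.e. $\Eb\big[\max_j |e_j^\top\Yv|^k |g(1+\eta_{j,n})-1|^k\big]$. For $k=1$ one substitutes $g(1+\eta_{j,n})-1 = \eta_{j,n}g'(1) + O(\eta_{j,n}^2)$ and uses $|e_j^\top\Yv| = |e_j^\top\Yv - n\Eb[e_j^\top Y_1]| + n|\Eb[e_j^\top Y_1]|$, bounding the centered part by its variance ($=1$ up to constants, since $\Eb[(e_j^\top Y_i)^2]=1/n$) times moments of $\eta_{j,n}$; the maximal (over $j$) versions bring in the $\log(ed)$ factors through standard maximal inequalities, and the product $e_j^\top\Yv\cdot\eta_{j,n}$ is handled by the moment bounds of \Cref{lemma:moment-bounds-for-Ustats}. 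For $k\geq 2$ the leading contribution is the genuinely higher-order $\Eb\|Y_1\|_\infty^{2k}$-type term, which under the standing assumptions $n\log^2(ed)\Eb\|Y_1\|_\infty^3\le 1$ collapses to the stated $n\Eb\|Y_1\|_\infty^3$ and $(n\Eb\|Y_1\|_\infty^3)^2$ forms (boundedness $\|Y_i\|_\infty\le 1$ lets one trade $\|Y_1\|_\infty^{2k}$ for $\|Y_1\|_\infty^3$).

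The main obstacle I anticipate is the careful bookkeeping of the term linear in $\nabla H$, i.e. controlling $\Eb\big[\max_j |e_j^\top\Yv|\,|\eta_{j,n}|\big]$ and its companions sharply enough to get the clean split into an $n\log(ed)\Eb\|Y_1\|_\infty^3$ piece and an $n\Norm{\Eb[Y_1]}_\infty$ piece. The difficulty is twofold: first, $\eta_{j,n}$ is not centered in a way that makes $e_j^\top\Yv$ and $\eta_{j,n}$ independent, so one must expand both as sums over $i$ and track the diagonal ($i=i'$) versus off-diagonal contributions — the diagonal terms are the ones producing $\Eb\|Y_1\|_\infty^3$, while the $\Eb[Y_1]$ contributions come from the fact that the off-diagonal ``centered'' pieces still have nonzero mean; second, taking the maximum over $j$ before or after expectation matters, and one needs a maximal inequality (Nemirovski / $L^p$ with $p\asymp\log d$, as in \Cref{lemma:moment-bounds-for-Ustats}) applied to a U-statistic-like object, which is exactly the technical content that \Cref{lemma:moment-bounds-for-Ustats} is designed to encapsulate. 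Once that lemma is invoked as a black box, the remaining steps are routine Taylor estimates and the arithmetic of combining the $h_j\varepsilon^{-j}$ bounds, so the proof should close cleanly under the two normalization hypotheses.
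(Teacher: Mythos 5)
Your structural setup matches the paper's: Taylor-expand $g$ at $1$, split $\sum_i (e_j^\top Y_i)\eta_{j,n}$ into a diagonal piece (the $L_j$'s, producing $n\Eb\|Y_1\|_\infty^3$) and an off-diagonal U-statistic piece (the $U_j$'s, where $\|\Eb Y_1\|_\infty$ enters), and invoke Lemma~\ref{lemma:moment-bounds-for-Ustats} for the maximal moment bounds. The crude bound $|\Eb\langle\nabla^k H,\cdot\rangle|\le h_k\Eb\|R\|_\infty^k$ is also exactly how the paper handles the quadratic and higher Taylor remainders.

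However, there is a genuine gap in how you treat the linear term $\Eb\langle\nabla H(\Yv),R\rangle$. You propose bounding it by $h_1\Eb\|R\|_\infty$. For the off-diagonal part $I_3=-\tfrac12\sum_{i_1\neq i_2}\sum_j U_j(Y_{i_1},Y_{i_2})e_j$, Cauchy--Schwarz plus Lemma~\ref{lemma:u-statistics} gives $\Eb\|I_3\|_\infty\lesssim\big(n\log^2(ed)\Eb\|Y_1\|_\infty^3+n^2\|\Eb Y_1\|_\infty^2\big)^{1/2}$, i.e.\ the \emph{square root} of what appears in the $h_2$ slot of the claimed bound. In the regime $n\log^2(ed)\Eb\|Y_1\|_\infty^3\le 1$ and $n\|\Eb Y_1\|_\infty\le1$, this square-root quantity is strictly larger than the target $h_1\big(n\log(ed)\Eb\|Y_1\|_\infty^3+n\|\Eb Y_1\|_\infty\big)$, so the crude $h_1$-bound cannot reach the stated estimate; after optimizing over $\varepsilon$ it would degrade the final Berry--Esseen rate. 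The paper avoids this by \emph{not} taking the absolute value inside the expectation: it writes $\Eb\langle\nabla H(\Yv),I_3\rangle = \tfrac{n(n-1)}{2}\Eb\langle\nabla H(\Yv),\Uv(Y_1,Y_2)\rangle$ and then Taylor-expands $\nabla H$, $\nabla^2 H$, and $\nabla^3 H$ around the leave-one-out points $\Yv_{-1}$ and $\Yv_{-\{1,2\}}$. The crucial cancellations are that $\Eb\langle\nabla H(\Yv_{-\{1,2\}}),\Uv(Y_1,Y_2)\rangle=0$ (independence of $\Yv_{-\{1,2\}}$ from $(Y_1,Y_2)$ plus $\Eb[\Uv]=0$) and that $\Eb_{Y_2}[\Uv(Y_1,Y_2)]=0$ (the kernel is degenerate in the second argument since $\Eb[(e_j^\top Y_2)^2-1/n]=0$), which kills the leading terms and pushes the entire contribution of $I_3$'s linear term onto $h_2$, $h_3$, $h_4$ with the much smaller prefactors $n\|\Eb Y_1\|_\infty$, $n\Eb\|Y_1\|_\infty^3$, and $(n\Eb\|Y_1\|_\infty^3)^2$. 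Without this nested expansion and the exploitation of the U-statistic's degeneracy, the stated $h_1$ coefficient is unreachable.
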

\begin{proof}[\bfseries{Proof of \Cref{lemma:Y_Ytilde}}]

% We shall consider
% \begin{eqnarray*}
%     \Delta_{1,n}:=\Abs{\Eb\left[H(\widetilde\Yv)\right]-\Eb\left[H\left(\Yv\right)\right]},\\
%     \Delta_{2,n}:=\Abs{\Eb\left[H\left(\Yv\right)\right]-\Eb\left[H\left(\Zv\right)\right]}.
% \end{eqnarray*} To analyze $\Delta_{1,n}$, i
It follows from the definition of $\widetilde{Y}_i$ and the first-order Taylor series expansion of $g(x)$ at $x=1$ (recalling $g(1)=1$),
\begin{align}
    \widetilde\Yv&=\Yv + \sum_{j=1}^d e_j^\top \Yv\set{g(1+\eta_{j,n})-1}e_j=\Yv + \sum_{j=1}^d\int_0^1 e_j^\top \Yv\eta_{j,n}g'(1+\theta_1\eta_{j,n})e_j\,d\theta_1.\label{eq:step4:1}
\end{align}
% \begin{eqnarray}\label{eq:step4:1}
%     \widetilde\Yv&=&\Yv + \sum_{j=1}^d e_j^\top \Yv\set{g(1+\eta_{j,n})-1}e_j\nonumber\\
%     &=&\Yv + \sum_{j=1}^d\int_0^1 e_j^\top \Yv\eta_{j,n}g'(1+\theta_1\eta_{j,n})e_j\,d\theta_1.
% \end{eqnarray} 
Recall that $\eta_{j, n}=\sum_{i=1}^{n}\{(e_{j}^{\top} Y_{i})^{2}-1/n\}$. Define mappings 
\[L_j(y) = (e_j^\top y)^3-n^{-1}(e_j^\top y) \quad\mbox{and}\quad U_j(y_1,y_2)=(e_j^\top y_1)\{(e_j^\top y_2)^2-n^{-1}\}\]
    %\frac{(e_j^\top y_1)\left((e_j^\top y_2)^2-%1\right)+ (e_j^\top y_2)\left((e_j^\top y_1)^2-%1\right)}{2},\]
for all $j=1,\ldots,d$. Then, one can write
\begin{eqnarray*}
    \sum_{i=1}^n(e_j^\top Y_i)\eta_{j,n} = \sum_{i=1}^nL_j(Y_i) + \sum_{i_1\neq i_2}U_j(Y_{i_1},Y_{i_2}).
\end{eqnarray*} Combining this with \eqref{eq:step4:1} gives
\begin{align*}
    \widetilde\Yv-\Yv= I_1 + I_2\quad\mbox{where}\quad I_1&:=\sum_{i=1}^n\sum_{j=1}^d\int_0^1L_j(Y_i)g'(1+\theta_1\eta_{j,n})e_j\,d\theta_1,\\
     \mbox{and}\quad I_2&:= \sum_{i_1\neq i_2}\sum_{j=1}^d\int_0^1U_j(Y_{i_1},Y_{i_2})g'(1+\theta_1\eta_{j,n})e_j\,d\theta_1.
\end{align*}
Similarly, the first-order Taylor series expansion of $g'(x)$ at $x=1$ implies
\begin{align*}
    I_2= I_3+I_4\quad\mbox{where}\quad I_3&:=-\frac{1}{2}\sum_{i_1\neq i_2}\sum_{j=1}^dU_j(Y_{i_1},Y_{i_2})e_j\\
    \mbox{and}\quad I_4&:=\sum_{i_1\neq i_2}\sum_{j=1}^d\int_0^1\int_0^1U_j(Y_{i_1},Y_{i_2})\theta_1\eta_{j,n}g''(1+\theta_1\theta_2\eta_{j,n})e_j\,d\theta_1d\theta_2.
\end{align*}
The term of interest can be bounded as
\begin{align*}
     &\Abs{\Eb[H(\widetilde\Yv)-H\left(\Yv\right)]} \le  \Abs{\Eb[H(\widetilde\Yv)-H\left(\Yv+I_2\right)]} + \Abs{\Eb[H(\Yv + I_2)-H\left(\Yv\right)]}\\
     &\qquad \le \Abs{\Eb[H(\widetilde\Yv)-H\left(\Yv+I_2\right)]} + \Abs{\Eb[H(\Yv + I_2)-H\left(\Yv + I_3\right)]} + \Abs{\Eb[H(\Yv + I_3)-H\left(\Yv\right)]}\\
     &\qquad := \mathrm{R}_1 + \mathrm{R}_2 +\mathrm{R}_3.
\end{align*}
We analyze three terms separately. 
% \begin{eqnarray*}
%     &&\widetilde\Yv-\Yv= I_1 + I_2\quad\mbox{where}\quad I_1=\sum_{i=1}^n\sum_{j=1}^d\int_0^1L_j(Y_i)g'(1+\theta_1\eta_{j,n})e_j\,d\theta_1,\\
%     &&\mbox{and}\quad I_2= \sum_{i_1\neq i_2}\sum_{j=1}^d\int_0^1U_j(Y_{i_1},Y_{i_2})g'(1+\theta_1\eta_{j,n})e_j\,d\theta_1.
% \end{eqnarray*} 
First, following the similar approach as the proof of Lemma~2.3 of  \cite{bentkus1996berry},
\begin{align*}
    \mathrm{R}_1 &= \Abs{\Eb[H(\widetilde\Yv)-H\left(\Yv+I_2\right)]}\leq\Eb\left[h_1\Norm{I_1}_\infty\right]\\
&=h_1\Eb\left[\max_{j=1,\ldots,d}\Abs{\sum_{i=1}^nL_j(Y_i)\int_0^1g'(1+\theta_1\eta_{j,n})\,d\theta_1}\right]\\
    &\leq4h_1\Eb\left[\max_{j=1,\ldots,d}\Abs{\sum_{i=1}^nL_j(Y_i)}\right]\leq 4h_1\left(\Eb\left[\Norm{Y_1}_\infty\right]+n\Eb[\Norm{Y_1}_\infty^3]\right)\\
    &\leq8h_1n\Eb[\Norm{Y_1}_\infty^3],
\end{align*}
% \begin{eqnarray*}
%     &&\Abs{\Eb\left[H(\widetilde\Yv)-H\left(\Yv+I_2\right)\right]}\leq\Eb\left[h_1\Norm{I_1}_\infty\right]\\
%     &=&h_1\Eb\left[\max_{j=1,\ldots,d}\Abs{\sum_{i=1}^nL_j(Y_i)\int_0^1g'(1+\theta_1\eta_{j,n})\,d\theta_1}\right]\\
%     &\leq&4h_1\Eb\left[\max_{j=1,\ldots,d}\Abs{\sum_{i=1}^nL_j(Y_i)}\right]\leq 4h_1\left(\Eb\left[\Norm{Y_1}_\infty\right]+n\Eb[\Norm{Y_1}_\infty^3]\right)\\
%     &\leq&4h_1n\Eb[\Norm{Y_1}_\infty^3],
% \end{eqnarray*} 
where we have used $\norm{g'}_\infty\leq4$. The last step follows as 
\begin{align*}
    \Eb\left[\Norm{Y_1}_\infty\right]  =   (\Eb[(e_j^\top Y_1)^2])^{-1} \Eb[(e_j^\top Y_1)^2] \Eb\left[\Norm{Y_1}_\infty\right] \le n\Eb[\Norm{Y_1}_\infty^3] 
\end{align*}
since $\Eb[(e_j^\top Y_1)^2] = 1/n$ from the definition of $Y_1$. To control $\mathrm{R}_2$, we note that
\begin{align*}
    \mathrm{R}_2 &= \Abs{\Eb\left[H\left(\Yv+I_2\right)-H\left(\Yv+I_3\right)\right]} = \Abs{\Eb\left[\Ip{H\left(\Yv+I_3\right),I_4}\right]}\leq\Eb\left[h_1\Norm{I_4}_\infty\right]\\
&=h_1\Eb\left[\max_{j=1,\ldots,d}\Abs{\sum_{i_1\neq i_2}U_j(Y_{i_1},Y_{i_2})\eta_{j,n}\int_0^1\int_0^1\theta_1g''(1+\theta_1\theta_2\eta_{j,n})\,d\theta_1d\theta_2}\right]\\
&\leq12h_1\Eb\left[\max_{j=1,\ldots,d}\Abs{\sum_{i_1\neq i_2}U_j(Y_{i_1},Y_{i_2})\eta_{j,n}}\right]\\
    &\leq12h_1\left\{\Eb\left[\max_{j=1,\ldots,d}\Abs{\sum_{i_1\neq i_2}U_j(Y_{i_1},Y_{i_2})}^2\right]\right\}^{1/2}\left\{\Eb\left[\max_{j=1,\ldots,d}\Abs{\sum_{i=1}^n(e_j^\top Y_i)^2-1}^2\right]\right\}^{1/2},
\end{align*}
where we used $\norm{g''}_\infty\leq 24$ for the penultimate inequality and the last inequality is Cauchy Schwarz inequality. \Cref{lemma:remainder2} proves that
\begin{align*}
    \left\{\Eb\left[\max_{j=1,\ldots,d}\Abs{\sum_{i=1}^n(e_j^\top Y_i)^2-1}^2\right]\right\}^{1/2} \le Cn^{1/2}\log^{1/2}(ed)\left(\Eb[\norm{Y_1}_\infty^3]\right)^{1/2},
\end{align*}
with some universal constant $C > 0$. Moreover, \Cref{lemma:u-statistics} implies that 
\begin{align*}
    \left\{\Eb\left[\max_{j=1,\ldots,d}\Abs{\sum_{i_1\neq i_2}U_j(Y_{i_1},Y_{i_2})}^2\right]\right\}^{1/2} \le C\left(n^{1/2}\log(ed)\left(\Eb[\norm{Y_1}_\infty^3]\right)^{1/2} + n\Norm{\Eb[Y_1]}_\infty\right),
\end{align*} for a possibly different universal constant $C>0$. Hence, as long as $n\log^2(d)\Eb[\norm{Y_1}_\infty^3]\leq1$, we get
\begin{eqnarray*}
    \mathrm{R}_2\leq C h_1\left(n\log(ed)\Eb[\norm{Y_1}_\infty^3+n\Norm{\Eb[Y_1]}_\infty\right).
\end{eqnarray*} The quantity $\mathrm{R}_3$ can be bounded as:
\begin{align*}
    \mathrm{R}_3 &=\Abs{\Eb[H(\Yv + I_3)-H\left(\Yv\right)]} \\&=\Abs{\Eb\left[H\left(\Yv+I_3\right)-H\left(\Yv\right)-\left\langle\nabla H\left(\Yv\right),I_3\right\rangle + \left\langle\nabla H\left(\Yv\right),I_3\right\rangle\right]} \\
    &\le\Abs{\Eb[H(\Yv+I_3)-H(\Yv)-\langle\nabla H(\Yv),I_3\rangle]} + \Abs{\Eb[\langle\nabla H(\Yv),I_3\rangle]}\\
    &=:\mathrm{R}_4+\mathrm{R}_5.
\end{align*}
To bound $\mathrm{R}_4$, it follows by the second order Taylor expansion of $H$ at $\Yv$ that
\begin{align*}
    \mathrm{R}_4&=\Abs{\Eb\left[(1-\tau)\Ip{\nabla^2H\left(\Yv+\tau I_3\right),I_3^{\otimes2}}\right]}\\
    &\leq\frac{1}{2}\Eb\left[h_2\norm{I_3}_\infty^2\right] = \frac{h_2}{8}\Eb\left[\max_{j=1,\ldots,d}\Abs{\sum_{i_1\neq i_2}U_j(Y_{i_1},Y_{i_2})}^2\right],
\end{align*}
% \begin{eqnarray*}
%     &&\Abs{\Eb\left[H\left(\Yv+I_3\right)-H\left(\Yv\right)-\left\langle\nabla H\left(\Yv\right),I_3\right\rangle\right]}=\Abs{\Eb\left[(1-\tau)\Ip{\nabla^2H\left(\Yv+\tau I_3\right),I_3^{\otimes2}}\right]}\\
%     &\leq&\frac{1}{2}\Eb\left[h_2\norm{I_3}_\infty^2\right] = \frac{h_2}{8}\Eb\left[\max_{j=1,\ldots,d}\Abs{\sum_{i_1\neq i_2}U_j(Y_{i_1},Y_{i_2})}^2\right]\leq....
% \end{eqnarray*} 
where $\tau\sim{\rm unif}(0,1)$, independent of everything. Hence, it can be further bounded with the aid of \Cref{lemma:u-statistics} as
\begin{eqnarray*}
    \mathrm{R}_4\leq Ch_2\left(n\log^2(ed)\Eb[\norm{Y_1}_\infty^3] + n^2\Norm{\Eb[Y_1]}_\infty^2\right).
\end{eqnarray*}
Finally, to control $\mathrm{R}_5$, we begin by noting that
\begin{align}\label{eq:R_5}
    \mathrm{R}_5
    & = \frac{1}{2}\Abs{\Eb\left[\Ip{\nabla H\left(\Yv\right),\sum_{i_1\neq i_2}\sum_{j=1}^dU_j(Y_{i_1},Y_{i_2})e_j}\right]}\nonumber\\
    &=\frac{n(n-1)}{2}\Abs{\Eb\left[\Ip{\nabla H\left(\Yv\right),\sum_{j=1}^dU_j(Y_1,Y_2)e_j}\right]}.
\end{align} To control the last expression, we write $\Uv(Y_1,Y_2) = \sum_{j=1}^dU_j(Y_1,Y_2)e_j$ and note from the first order Taylor expansion of $\nabla H$ at $\Yv_{-1}:=\Yv-Y_1$ that
\begin{align}
    \Ip{\nabla H\left(\Yv\right),\Uv(Y_1,Y_2)}
    &= \Ip{\nabla H\left(\Yv_{-1}\right),\Uv(Y_1,Y_2)}\label{eq:pf.lem5.1}\\
    &\qquad+ \,\Eb_{\tau_1}\left[\Ip{\nabla^2 H\left(\Yv_{-1}+\tau_1 Y_1\right),\Uv(Y_1,Y_2)\otimes Y_1}\right],\label{eq:pf.lem5.2}
\end{align} where $\tau_1\sim{\rm Unif}(0,1)$ and $\Eb_{\tau_1}$ is the expectation taken over $\tau_1$. For \eqref{eq:pf.lem5.1}, we do Taylor expansion once more on $\nabla H$ at $\Yv_{-\set{1,2}}:=\Yv-Y_1-Y_2$, which leads to
\begin{align*}
    \Ip{\nabla H\left(\Yv_{-1}\right),\Uv(Y_1,Y_2)} &= \Ip{\nabla H\left(\Yv_{-\set{1,2}}\right),\Uv(Y_1,Y_2)} \\
    &\qquad +\Eb_{\tau_2}\left[\Ip{\nabla^2 H\left(\Yv_{-\set{1,2}}+\tau_2 Y_2\right),\Uv(Y_1,Y_2)\otimes Y_2}\right],
\end{align*} for an independent $\tau_2\sim{\rm Unif}(0,1)$. Since $\Yv_{-\set{1,2}}$ and $(Y_1,Y_2)$ are independent and $\Eb[\Uv(Y_1,Y_2)]=0$, we have
\begin{eqnarray*}
    \Eb\left[\Ip{\nabla H\left(\Yv_{-\set{1,2}}\right),\Uv(Y_1,Y_2)}\right]=\Ip{\Eb\left[\nabla H\left(\Yv_{-\set{1,2}}\right)\right],\Eb\left[\Uv(Y_1,Y_2)\right]}=0.
\end{eqnarray*} Moreover, since $Y_1$ is independent from $\Yv_{-\set{1,2}}$ and $Y_2$,
\begin{align*}
    &\Abs{\Eb\left[\Ip{\nabla^2 H\left(\Yv_{-\set{1,2}}+\tau_2 Y_2\right),\Uv(Y_1,Y_2)\otimes Y_2}\right]}\\
    &\qquad=\Abs{\Eb\left[\Ip{\nabla^2 H\left(\Yv_{-\set{1,2}}+\tau_2 Y_2\right),\Eb_{Y_1}\left[\Uv(Y_1,Y_2)\right]\otimes Y_2}\right]}
    \\
    &\qquad\leq h_2\Eb\left[\Norm{\Eb_{Y_1}[\Uv(Y_1,Y_2)\otimes Y_2]}_\infty\right]\\
    &\qquad\leq h_2\Norm{\Eb[Y_1]}_\infty\Eb\left[\max_{j=1,\ldots,d}\Abs{(e_j^\top Y_2)^2-\frac{1}{n}} \times\max_{j=1,\ldots,d}\Abs{e_j^\top Y_2}\right]\\
    &\qquad\leq h_2\Norm{\Eb[Y_1]}_\infty\Set{\Eb\left[\Norm{Y_1}_\infty^3 + n^{-1}\Eb\left[\Norm{Y_1}_\infty\right]\right]}\leq 2h_2\Norm{\Eb[Y_1]}_\infty\Eb[\Norm{Y_1}_\infty^3],
\end{align*} where the last inequality follows from $\Eb[\norm{Y_1}_\infty]\leq n\Eb[\norm{Y_1}_\infty^2]$. Combining these gives the bound for the expected value of \eqref{eq:pf.lem5.1} as
\begin{eqnarray*}
    \Abs{\Ip{\nabla H\left(\Yv_{-1}\right),\Uv(Y_1,Y_2)}}\leq 2h_2\Norm{\Eb[Y_1]}_\infty\Eb[\Norm{Y_1}_\infty^3].
\end{eqnarray*} To control \eqref{eq:pf.lem5.2}, we inspect the quantity inside the expectation and apply Taylor expansion on $\nabla H$ at $\Yv_{-\set{1,2}}+\tau Y_1$ for given $\tau_1\in(0,1)$ as
\begin{align}\label{eq:pf.lem5.3}
    &\Ip{\nabla^2 H\left(\Yv_{-1}+\tau_1 Y_1\right),\Uv(Y_1,Y_2)\otimes Y_1}\nonumber\\
    &\qquad =\Ip{\nabla^2 H\left(\Yv_{-\set{1,2}}+\tau_1 Y_1\right),\Uv(Y_1,Y_2)\otimes Y_1}\nonumber\\
    &\qquad\qquad+\Eb_{\tau_3}\left[\Ip{\nabla^3 H\left(\Yv_{-\set{1,2}}+\tau_1 Y_1+\tau_3 Y_2\right),\Uv(Y_1,Y_2)\otimes Y_1\otimes Y_2}\right],
\end{align} for $\tau_3\sim{\rm Unif}(0,1)$. The independence of $Y_2$ and $(\Yv_{-\set{1,2}}, Y_1)$ helps control the leading term on the right-hand side of \eqref{eq:pf.lem5.3} as
\begin{align*}
    &\Eb\left[\Ip{\nabla^2 H\left(\Yv_{-\set{1,2}}+\tau_1 Y_1\right),\Uv(Y_1,Y_2)\otimes Y_1}\right]\\
    &\qquad=\Eb\left[\Ip{\nabla^2 H\left(\Yv_{-\set{1,2}}+\tau_1 Y_1\right),\Eb_{Y_2}[\Uv(Y_1,Y_2)]\otimes Y_1}\right]=0.
\end{align*} For the rightmost term in \eqref{eq:pf.lem5.3}, we apply Taylor expansion once more on $\nabla^3 H$ at $\Yv_{-\set{1,2}}+\tau_3Y_2$ for a fixed $\tau_2\in(0,1)$ to get
\begin{align*}
     &\Eb\left[\Ip{\nabla^3 H\left(\Yv_{-\set{1,2}}+\tau_1 Y_1+\tau_3 Y_2\right),\Uv(Y_1,Y_2)\otimes Y_1\otimes Y_2}\right]\\
     &\qquad=\Eb\left[\Ip{\nabla^3 H\left(\Yv_{-\set{1,2}}+\tau_3 Y_2\right),\Uv(Y_1,Y_2)\otimes Y_1\otimes Y_2}\right]\\
     &\qquad\qquad+\Eb\left[\Ip{\nabla^4 H\left(\Yv_{-\set{1,2}}+\tau_1\tau_4 Y_1+\tau_3 Y_2\right),\Uv(Y_1,Y_2)\otimes Y_1\otimes Y_2\otimes(\tau_1Y_1)}\right].
\end{align*} Note that
\begin{align*}
    &\Abs{\Eb\left[\Ip{\nabla^3 H\left(\Yv_{-\set{1,2}}+\tau_3 Y_2\right),\Uv(Y_1,Y_2)\otimes Y_1\otimes Y_2}\right]} \\
    &\qquad= \Abs{\Eb\left[\Ip{\nabla^3 H\left(\Yv_{-\set{1,2}}+\tau_3 Y_2\right),\Eb_{Y_1}[\Uv(Y_1,Y_2)\otimes Y_1]\otimes Y_2}\right]}\\
    &\qquad\leq h_3\Eb\left[\Norm{\Eb_{Y_1}[\Uv(Y_1,Y_2)\otimes Y_1]\otimes Y_2}_\infty\right]\\
    &\qquad\leq h_3\left(\Norm{\Eb Y_1}_\infty^2\vee\frac{1}{n}\right)\Eb\left[\max_{j=1,\ldots,d}\Abs{(e_j^\top Y_2)^2-\frac{1}{n}} \max_{j=1,\ldots,d}\Abs{e_j^\top Y_2}\right]\\
    &\qquad\leq2h_3n^{-1}\Eb[\Norm{Y_1}_\infty^3].
\end{align*} Furthermore, we have
\begin{align*}
    &\Eb\left[\Ip{\nabla^4 H\left(\Yv_{-\set{1,2}}+\tau_1\tau_4 Y_1+\tau_3 Y_2\right),\Uv(Y_1,Y_2)\otimes Y_1\otimes Y_2\otimes(\tau_1Y_1)}\right]\\
    &\qquad \leq h_4\Eb\left[\norm{\Uv(Y_1,Y_2)\otimes Y_1\otimes Y_2\otimes(\tau_1Y_1)}_\infty\right]\\
    &\qquad\leq\frac{h_4}{2}\Eb\left[\norm{Y_1}_\infty^3\right]\Eb\left[\max_{j=1,\ldots,d}\Abs{(e_j^\top Y_2)^2-\frac{1}{n}} \max_{j=1,\ldots,d}\Abs{e_j^\top Y_2}\right]\\
    &\qquad\leq h_4\left(\Eb[\Norm{Y_1}_\infty^3]\right)^2.
\end{align*} Combining these controls the last term in \eqref{eq:pf.lem5.3} as
\begin{align*}
    \Eb\left[\Ip{\nabla^3 H\left(\Yv_{-\set{1,2}}+\tau_1 Y_1+\tau_3 Y_2\right),\Uv(Y_1,Y_2)\otimes Y_1\otimes Y_2}\right]
    \leq 2h_3n^{-1}\Eb[\Norm{Y_1}_\infty^3]+h_4\left(\Eb[\Norm{Y_1}_\infty^3]\right)^2.
\end{align*} Putting all together leads to
\begin{align*}
    &\Abs{\Eb\Ip{\nabla H\left(\Yv\right),\Uv(Y_1,Y_2)}}\leq 2h_2\Norm{\Eb[Y_1]}_\infty\Eb[\Norm{Y_1}_\infty^3] +2h_3n^{-1}\Eb[\Norm{Y_1}_\infty^3]+h_4\left(\Eb[\Norm{Y_1}_\infty^3]\right)^2.
\end{align*} This implies that
\begin{align*}
    \mathrm{R}_5 \leq h_2n\Norm{\Eb[Y_1]}_\infty n\Eb[\Norm{Y_1}_\infty^3]+h_3n\Eb[\Norm{Y_1}_\infty^3]+h_4\left(n\Eb[\Norm{Y_1}_\infty^3]\right)^2.
\end{align*} Hence, as long as $n\Eb[\norm{Y_1}_\infty]^3\leq1$, one has
\begin{eqnarray*}
    \mathrm{R}_5\leq h_2n\Norm{\Eb[Y_1]}_\infty+h_3n\Eb[\Norm{Y_1}_\infty^3]+h_4\left(n\Eb[\Norm{Y_1}_\infty^3]\right)^2.
\end{eqnarray*} Putting all together concludes the proof.

% \begin{eqnarray*}
    
% \end{eqnarray*}
%    \kt{Fill in}
\end{proof}

\begin{lemma}\label{lemma:u-statistics}
Recall $U_j (y_1,y_2) \mapsto (e_j^\top y_1)\{(e_j^\top y_2)^2-n^{-1}\}$ and $Y_i$ is defined as \eqref{def:truncted_rv}. Suppose that $n\log^2(ed)\Eb[\norm{Y_1}_\infty^3]\leq1$ holds, then there exists a universal constant $C>0$ such that 
    \begin{align}
        \Eb\left[\max_{j=1,\ldots,d}\Abs{\sum_{i_1\neq i_2}U_j(Y_{i_1},Y_{i_2})}^2\right] \le C\left(n\log^2(ed)\Eb[\norm{Y_1}_\infty^3] + n^2\Norm{\Eb[Y_1]}_\infty^2\right).
    \end{align}
\end{lemma}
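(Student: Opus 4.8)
The plan is to expand the square, separate the diagonal and off-diagonal contributions of the degenerate $U$-statistic, and control each by moment bounds together with a maximal inequality over $j=1,\dots,d$. Write $S_j := \sum_{i_1\neq i_2} U_j(Y_{i_1},Y_{i_2})$. Decompose $U_j(y_1,y_2) = (e_j^\top y_1)\{(e_j^\top y_2)^2 - n^{-1}\}$; since $\Eb[(e_j^\top Y_2)^2 - n^{-1}] = 0$, the kernel is degenerate in its \emph{second} argument but not its first (because $\Eb[e_j^\top Y_1]$ need not vanish), which is exactly why the $\norm{\Eb[Y_1]}_\infty$ term appears. So first I would split $e_j^\top Y_1 = (e_j^\top Y_1 - \Eb[e_j^\top Y_1]) + \Eb[e_j^\top Y_1]$, giving $S_j = S_j^{(0)} + \Eb[e_j^\top Y_1]\sum_{i_1\neq i_2}\{(e_j^\top Y_{i_2})^2 - n^{-1}\}$, where $S_j^{(0)}$ is a completely degenerate (canonical) $U$-statistic of order 2.

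For the centered part $S_j^{(0)}$, I would bound $\Eb[\max_j |S_j^{(0)}|^2]$ by summing the second moments over $j$ after a union-type step, or more sharply, use a moment/maximal inequality for canonical $U$-statistics (e.g. a Rosenthal-type bound, or directly computing $\Eb[(S_j^{(0)})^2] = \sum_{i_1\neq i_2}\Eb[U_j^{(0)}(Y_{i_1},Y_{i_2})^2]$ plus cross terms that vanish by degeneracy) to get $\Eb[(S_j^{(0)})^2] \lesssim n^2 \Eb[(e_j^\top Y_1)^2]\Eb[((e_j^\top Y_1)^2-n^{-1})^2] \lesssim n^2 \cdot n^{-1} \cdot \Eb[(e_j^\top Y_1)^4] \le n\, \Eb[\norm{Y_1}_\infty^4] \le n\,\Eb[\norm{Y_1}_\infty^3]$, using $\norm{Y_1}_\infty\le 1$. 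Passing to the maximum over $j$ costs a $\log(ed)$ factor through a Nemirovski/Lévy-type maximal inequality for sums (applied to the Hoeffding decomposition / decoupled form), and chaining these gives the $n\log^2(ed)\Eb[\norm{Y_1}_\infty^3]$ term (one $\log$ from the maximal inequality over coordinates, one more from iterating it through the two-sample structure, matching the statement); the hypothesis $n\log^2(ed)\Eb[\norm{Y_1}_\infty^3]\le 1$ is what lets lower-order contributions be absorbed. For the second, non-degenerate piece, $|\Eb[e_j^\top Y_1]|\le\norm{\Eb[Y_1]}_\infty$ and $\sum_{i_1\ne i_2}\{(e_j^\top Y_{i_2})^2-n^{-1}\} = (n-1)\sum_{i_2}\{(e_j^\top Y_{i_2})^2-n^{-1}\} = (n-1)(\eta_{j,n})$, so this contributes $\norm{\Eb[Y_1]}_\infty^2 (n-1)^2 \Eb[\max_j \eta_{j,n}^2]$, and $\Eb[\max_j\eta_{j,n}^2]\lesssim n\log(ed)\Eb[\norm{Y_1}_\infty^3]\le 1$ by \Cref{lemma:remainder2}'s internal bound (via \Cref{lemma:moment-bounds-for-Ustats}), yielding the $n^2\norm{\Eb[Y_1]}_\infty^2$ term.

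The main obstacle I anticipate is obtaining the sharp $\log^2(ed)$ (rather than $\log^3$ or worse) dependence when taking the maximum over $j$ of the canonical $U$-statistic $S_j^{(0)}$: a naive union bound over $d$ coordinates combined with a tail bound for each degenerate $U$-statistic would give extra logarithmic factors, so I would instead use a direct $L^2$ maximal inequality — either decoupling $S_j^{(0)}$ into $\sum_i (e_j^\top Y_i - \Eb e_j^\top Y_i)\sum_{i'\ne i}\{(e_j^\top Y_{i'})^2 - n^{-1}\}$ and conditioning to reduce to a maximal inequality for an ordinary sum of independent terms (one $\log$), nested inside another such inequality for the inner sum (second $\log$) — tracking constants so that the cross-terms and the diagonal $i_1 = i_2$ corrections stay of lower order under the standing assumption. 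I would then collect the two pieces via $\Eb[\max_j|S_j|^2]\le 2\Eb[\max_j|S_j^{(0)}|^2] + 2\Eb[\max_j|\Eb[e_j^\top Y_1](n-1)\eta_{j,n}|^2]$ to conclude.
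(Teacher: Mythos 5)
Your high-level strategy is sound and in fact broadly parallels the paper's: both approaches ultimately require (i) a decoupling step, (ii) passing from a product $A_j B_j$ to a product of $\Eb\max_j|A_j|^2$ and $\Eb\max_j|B_j|^2$ via independence, and (iii) $L^2$ maximal inequalities for sums of independent terms (this is what \Cref{lemma:moment-bounds-for-Ustats} supplies in the paper). Your idea of performing the Hoeffding split first, pulling out $\Eb[e_j^\top Y_1]$, is a perfectly good variant: in the paper the non-degenerate contribution instead shows up as the $n^2\|\Eb Y_1\|_\infty^2$ term inside the bound on $\Eb\max_j|\sum_i e_j^\top Y_i|^2$ (first display of \Cref{lemma:moment-bounds-for-Ustats}), so the two routes account for the same term in different places. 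Your treatment of the non-degenerate piece, via $(n-1)\eta_{j,n}$ and the standing assumption, is correct.

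The gap is in the treatment of the canonical part $S_j^{(0)}$, and you have correctly identified it as the main obstacle but not actually closed it. Two specific issues. First, the expression you call a ``decoupled'' form,
\begin{equation*}
\sum_{i}\bigl(e_j^\top Y_i-\Eb e_j^\top Y_1\bigr)\sum_{i'\neq i}\bigl\{(e_j^\top Y_{i'})^2-n^{-1}\bigr\},
\end{equation*}
is not decoupled --- it is algebraically identical to $S_j^{(0)}$ itself, so ``conditioning'' on the inner sum does not produce independence, and the nested maximal-inequality picture you sketch does not literally apply. What is needed is the genuine decoupling inequality (Equation (3.1.8) of \citet{de2012decoupling}), which at the level of $\Eb\max_j|\cdot|^2$ lets you replace $\sum_{i_1\neq i_2}U_j(Y_{i_1},Y_{i_2})$ by $\sum_{i_1\neq i_2}U_j(Y_{i_1},Y'_{i_2})$ with an independent copy $Y'$, at the cost of a universal constant. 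Once decoupled, the off-diagonal sum becomes a genuine product $A_j B_j$ (minus a diagonal correction), with $A_j=\sum_{i_1}e_j^\top Y_{i_1}$ and $B_j=\sum_{i_2}\{(e_j^\top Y'_{i_2})^2-1/n\}$ \emph{independent}, so that $\Eb\max_j|A_jB_j|^2\le \Eb\max_j|A_j|^2\cdot\Eb\max_j|B_j|^2$; the two logarithms then come from applying \Cref{lemma:moment-bounds-for-Ustats} to each factor, not from ``iterating a maximal inequality through the two-sample structure.'' Second, the claim that ``cross terms vanish by degeneracy'' in $\Eb[(S_j^{(0)})^2]$ is slightly off for the asymmetric kernel: the swapped pair $(i_1,i_2)$ vs.\ $(i_2,i_1)$ does not vanish (though it is of the same order), so the displayed formula should carry a factor of $2$ or an inequality; this does not affect the rate but should be stated correctly. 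In short, your proposal would be complete once the informal ``decoupling + conditioning'' is replaced by the de la Peña--Giné decoupling inequality and the independence-based factorization of the product of maxima, together with \Cref{lemma:moment-bounds-for-Ustats} for the resulting three elementary sums --- which is precisely the paper's route.
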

\begin{proof}[\bfseries{Proof of \Cref{lemma:u-statistics}}]
    Let $(Y'_1,\ldots,Y_n')$ be an independent copy of $(Y_1,\ldots,Y_n)$. Equation (3.1.8) of \cite{de2012decoupling} on decoupling of $U$-statistics applies and yields
    \begin{align*}
&\Eb\left[\max_{j=1,\ldots,d}\Abs{\sum_{i_1\neq i_2}U_j(Y_{i_1},Y_{i_2})}^2\right]\leq 64\Eb\left[\max_{j=1,\ldots,d}\Abs{\sum_{i_1\neq i_2}U_j(Y_{i_1},Y'_{i_2})}^2\right]\\
    &\qquad =64\Eb\left[\max_{j=1,\ldots,d}\Abs{\sum_{i_1=1}^n\sum_{i_2=1}^n (e_j^\top Y_{i_1}) \left((e_j^\top Y_{i_2}')^2-\frac{1}{n}\right)-\sum_{i=1}^n\Set{(e_j^\top Y_i)^3-\frac{e_j^\top Y_i}{n}}}^2\right]\\
    &\qquad \leq128\Bigg[\Eb\left[\max_{j=1,\ldots,d}\Abs{\sum_{i=1}^n(e_j^\top Y_i)}^2\right]\Eb\left[\max_{j=1,\ldots,d}\Abs{\sum_{i=1}^n\Bigg\{(e_j^\top Y_i)^2-1/n\bigg\}}^2\right]\\
&\qquad \qquad +\Eb\left[\max_{j=1,\ldots,d}\Abs{\sum_{i=1}^n\Set{(e_j^\top Y_i)^3-\frac{e_j^\top Y_i}{n}}}^2\right]\Bigg].
    \end{align*}
Finally by \cref{lemma:moment-bounds-for-Ustats}, we conclude with some universal constant $C$, 
\begin{align*}
    &\Eb\left[\max_{j=1,\ldots,d}\Abs{\sum_{i_1\neq i_2}U_j(Y_{i_1},Y_{i_2})}^2\right]\\
    &\qquad \leq C\left(\log(ed)+\left(n\log^2(ed)\Eb[\norm{Y_1}_\infty^3]\right)^{2/3}+\left(n\log(ed)\Eb[\norm{Y_1}_\infty^3]\right)^{1/2}+n^2\Norm{\Eb[Y_1]}_\infty^2\right)\\
    &\qquad\quad\times\left(n\log(ed)\Eb[\norm{Y_1}_\infty^3] + \left(n\log^{1/2}(ed)\Eb[\norm{Y_1}_\infty^3]\right)^{4/3}\right)+ C\left(n\log(ed)\Eb[\norm{Y_1}_\infty^3] + n^2\Norm{\Eb Y_1}_\infty^2\right)\\
    &\qquad\leq C\Big[n\log^2(ed)\Eb[\norm{Y_1}_\infty^3] + \left(n\log^{5/4}(ed)\Eb[\norm{Y_1}_\infty^3]\right)^{4/3}+ \left(n\log^{7/5}(ed)\Eb[\norm{Y_1}_\infty^3]\right)^{5/3}\\
    &\qquad\qquad+\left(n\log(ed)\Eb[\norm{Y_1}_\infty^3]\right)^{2}+ \left(n\log(ed)\Eb[\norm{Y_1}_\infty^3]\right)^{3/2}+ \left(n\log^{7/11}(ed)\Eb[\norm{Y_1}_\infty^3]\right)^{11/6}\Big]\\
&\qquad\qquad+Cn^2\Norm{\Eb[Y_1]}_\infty^2\left[1+n\log(ed)\Eb[\norm{Y_1}_\infty^3] + \left(n\log^{1/2}(ed)\Eb[\norm{Y_1}_\infty^3]\right)^{4/3}\right].
\end{align*} Hence, the lemma follows.
\end{proof}

% \begin{eqnarray*}
%     &&\frac{1}{2}\Eb\left[\max_{j=1,\ldots,d}\Abs{\sum_{i=1}^n\xi_{ij}}^2\right]\\
%     &\leq&\Eb\left[\max_{j=1,\ldots,d}\Abs{\sum_{i=1}^n\xi_{ij}-\Eb[\xi_{ij}]}^2\right]+\max_{j=1,\ldots,d}\Abs{\sum_{i=1}^n\Eb[\xi_{ij}]}^2
%     \\&=& \Eb^2\left[\max_{j=1,\ldots,d}\sum_{i=1}^n\left(\xi_{ij}-\Eb[\xi_{ij}]\right)\right] + {\rm Var}\left[\max_{j=1,\ldots,d}\sum_{i=1}^n\left(\xi_{ij}-\Eb[\xi_{ij}]\right)\right]+\max_{j=1,\ldots,d}\Abs{\sum_{i=1}^n\Eb[\xi_{ij}]}^2\\
%     &\leq&\Eb^2\left[\max_{j=1,\ldots,d}\sum_{i=1}^n\left(\xi_{ij}-\Eb[\xi_{ij}]\right)\right] + \Eb\left[\max_{j=1,\ldots,d}\sum_{i=1}^n\left(\xi_{ij}-\Eb[\xi_{ij}]\right)^2\right]\\
%     &&+ \max_{j=1,\ldots,d}\Eb\left[\sum_{i=1}^n\left(\xi_{ij}-\Eb[\xi_{ij}]\right)^2\right]+\max_{j=1,\ldots,d}\Abs{\sum_{i=1}^n\Eb[\xi_{ij}]}^2\\
%     &\leq&\Eb^2\left[\max_{j=1,\ldots,d}\sum_{i=1}^n\left(\xi_{ij}-\Eb[\xi_{ij}]\right)\right] + \Eb\left[\max_{j=1,\ldots,d}\sum_{i=1}^n\Set{\left(\xi_{ij}-\Eb[\xi_{ij}]\right)^2-\Eb[\left(\xi_{ij}-\Eb[\xi_{ij}]\right)^2]}\right]\\
%     &&+ 2\max_{j=1,\ldots,d}\Eb\left[\sum_{i=1}^n\left(\xi_{ij}-\Eb[\xi_{ij}]\right)^2\right]+\max_{j=1,\ldots,d}\Abs{\sum_{i=1}^n\Eb[\xi_{ij}]}^2\\
% % \end{eqnarray*}
% \begin{lemma} 
% \end{lemma}
\begin{lemma}\label{lemma:moment-bounds-for-Ustats}
%Recall $Y_i$ is defined in \eqref{def:truncted_rv} and we denote by $\lesssim$ the boundedness up to a universal constant $C$ where the values of $C$ may vary lines by lines. Then the following statements hold:
Recall $Y_i$ is defined in \eqref{def:truncted_rv} and we denote by $\lesssim$ the boundedness up to a universal constant. Then the following statements hold:
\begin{equation}
    \Eb\left[\max_{j=1,\ldots,d}\Abs{\sum_{i=1}^ne_j^\top Y_i}^2\right]\lesssim\\
\log(ed)+\left(n\log^2(ed)\Eb[\norm{Y_1}_\infty^3]\right)^{\frac{2}{3}}+\left(n\log(ed)\Eb[\norm{Y_1}_\infty^3]\right)^{\frac{1}{2}}+n^2\Norm{\Eb[Y_1]}_\infty^2,
\end{equation}
\begin{equation}
    \Eb\left[\max_{j=1,\ldots,d}\Abs{\sum_{i=1}^n(e_j^\top Y_i)^2-1}^2\right]\lesssim n\log(ed)\Eb[\norm{Y_1}_\infty^3] + \left(n\log^{1/2}(ed)\Eb[\norm{Y_1}_\infty^3]\right)^{4/3},
\end{equation} and
\begin{equation}
    \Eb\left[\max_{j=1,\ldots,d}\Abs{\sum_{i=1}^n(e_j^\top Y_i)^3-\frac{e_j^\top Y_i}{n}}^2\right] \lesssim n\log(ed)\Eb[\norm{Y_1}_\infty^3] + n^2\Norm{\Eb Y_1}_\infty^2.
\end{equation}
\end{lemma}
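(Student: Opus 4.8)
The three assertions are second-moment maximal inequalities for $\ell_\infty$-norms of sums of i.i.d.\ vectors, and the plan is to prove all three by one common recipe: center, invoke a maximal inequality, supply the right variance proxy and envelope-moment estimates, and optimize a truncation level. Writing $\mu=\Eb[Y_1]$, I would first peel off the coordinatewise mean of each sum — $\sum_{i=1}^n e_j^\top Y_i=\sum_{i=1}^n(e_j^\top Y_i-\mu_j)+n\mu_j$ (the second assertion is already centered, since $\Eb[(e_j^\top Y_i)^2]=1/n$ exactly) — so that the deterministic pieces contribute $\max_j|n\mu_j|^2\le n^2\Norm{\Eb[Y_1]}_\infty^2$; for the cubic sum $\sum_i L_j(Y_i)$ the deterministic remainder $\max_j|n\Eb[(e_j^\top Y_1)^3]-\Eb[e_j^\top Y_1]|^2\lesssim(n\Eb\norm{Y_1}_\infty^3)^2+\Norm{\Eb Y_1}_\infty^2$ is, under the standing hypothesis $n\Eb\norm{Y_1}_\infty^3\le1$, at most $n\Eb\norm{Y_1}_\infty^3+n^2\Norm{\Eb Y_1}_\infty^2$.

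For each centered sum I would then apply a maximal inequality of Bernstein/Fuk--Nagaev type — obtained by symmetrization, truncating the increments at a level $\tau$, a Bernstein estimate with a union bound over the $d$ coordinates on the bounded part, and a direct bound on the rare discarded part — of the shape: for i.i.d.\ mean-zero $\xi_i\in\Real^d$ and any $\tau>0$,
\begin{align*}
    \Eb\Big[\Norm{\textstyle\sum_{i=1}^n\xi_i}_\infty^2\Big]^{1/2}&\lesssim \log^{1/2}(ed)\,V^{1/2}+\tau\log(ed)\\
    &\quad+ n\,\Eb\!\big[\norm{\xi_1}_\infty\mathbf{1}\{\norm{\xi_1}_\infty>\tau\}\big]+\big(n\,\Eb\!\big[\norm{\xi_1}_\infty^2\mathbf{1}\{\norm{\xi_1}_\infty>\tau\}\big]\big)^{1/2},
\end{align*}
with $V=\max_{j}\sum_i\Eb[\xi_{ij}^2]$. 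It remains to bound $V$ and the two truncated tails in each case using only $\norm{Y_i}_\infty\le1$ a.s.\ and $\Eb[(e_j^\top Y_i)^2]=1/n$. For the first assertion, $V\le\max_j\sum_i\Eb[(e_j^\top Y_i)^2]=1$ and $\Eb\norm{Y_1-\mu}_\infty^3\lesssim\Eb\norm{Y_1}_\infty^3+\Norm{\Eb Y_1}_\infty^2$, so the tails decay like $\tau^{-2}$ and $\tau^{-1/2}$ against $n(\Eb\norm{Y_1}_\infty^3+\Norm{\Eb Y_1}_\infty^2)$; choosing $\tau\asymp(n\Eb\norm{Y_1}_\infty^3/\log(ed))^{1/3}$ and squaring yields the first display (the displayed term $(n\log(ed)\Eb\norm{Y_1}_\infty^3)^{1/2}$ being in fact dominated by $\log(ed)$). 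For the second assertion, the elementary bound $\Eb[(e_j^\top Y_i)^4]\le\Eb\norm{Y_i}_\infty^3$ gives $V\le n\Eb\norm{Y_1}_\infty^3$, while $\norm{\xi_1}_\infty\le\norm{Y_1}_\infty^2\vee n^{-1}$ makes the truncated tails decay like $\tau^{-1/2}$ against $n\Eb\norm{Y_1}_\infty^3$; choosing $\tau\asymp(n\Eb\norm{Y_1}_\infty^3/\log(ed))^{2/3}$ and squaring gives the second display. The third assertion is the same: split $L_j(Y_i)=(e_j^\top Y_i)^3-n^{-1}e_j^\top Y_i$; the contribution of $n^{-1}\sum_i e_j^\top Y_i$ is $n^{-2}$ times the first-assertion bound, hence (using $\Eb\norm{Y_1}_\infty^3\ge n^{-3/2}$) absorbed into $n\log(ed)\Eb\norm{Y_1}_\infty^3+n^2\Norm{\Eb Y_1}_\infty^2$, and $\sum_i(e_j^\top Y_i)^3$ is handled as in the second assertion, with $V\le\sum_i\Eb[(e_j^\top Y_i)^6]\le n\Eb\norm{Y_1}_\infty^3$, the resulting $(n\log^{1/2}(ed)\Eb\norm{Y_1}_\infty^3)^{4/3}$ correction being dominated by $n\log(ed)\Eb\norm{Y_1}_\infty^3$.

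The main obstacle is twofold. The substantive part is establishing (or precisely citing) the maximal inequality above with the stated logarithmic exponents: the mechanism is standard, but the coefficients matter, since it is the interplay of the $\log(ed)$-weight on the $\tau$-term with the polynomial decay rate of the truncated tail ($\tau^{-2}$ when only the $\ell_\infty$-envelope itself is controlled, versus $\tau^{-1/2}$ when the envelope is a square) that produces the distinctive exponents $2/3$ and $4/3$ in the displayed remainders. The second, more routine but lengthy, part is the bookkeeping that collapses the many lower-order terms — from centering, from the cross terms in squaring, and from the $n^{-1}$-corrections — into the compact right-hand sides; this systematically uses the two standing hypotheses together with the elementary lower bound $\Eb\norm{Y_1}_\infty^3\ge n^{-3/2}$ (from $\Eb[(e_j^\top Y_1)^2]=1/n$ and Jensen's inequality), which is what lets terms of order $n^{-2}$, $n^{-3}$, etc.\ be discarded.
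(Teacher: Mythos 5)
Your route is genuinely different from the paper's, though in the same Fuk--Nagaev family. The paper first centers, then splits the second moment of the $\ell_\infty$-max into a squared mean plus a variance, controls the variance by the Efron--Stein inequality (Theorem 11.1 of \citet{Boucheron2013concentration}), and controls both the resulting first moment of the max and the inner max-of-squares by a \emph{single cited, already-optimized} $L^1$ maximal inequality (Proposition B.1 of \citet{kuchibhotla2022least}), choosing the moment index $q=3$ in each application. You instead postulate a \emph{two-sided $L^2$} Fuk--Nagaev maximal inequality with a free truncation level $\tau$, and recover the exponents $2/3$ and $4/3$ by optimizing $\tau$ for each of the three choices of summand. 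After optimizing $\tau$, your inequality collapses to essentially the same estimate as Proposition B.1, so the two approaches are interchangeable; yours is conceptually cleaner (no separate Efron--Stein step) but shifts the burden onto establishing that $L^2$ inequality, which you only sketch (symmetrization, Bernstein with union bound on the bounded part, Rosenthal on the discarded tail). You explicitly flag this as the main obstacle, and you are right: the constants in the $\tau\log(ed)$ and tail terms are exactly what drive the stated exponents, so they cannot be waved away. Two smaller points. First, in the third assertion you absorb the deterministic piece $\max_j\bigl|n\,\Eb[(e_j^\top Y_1)^3]\bigr|^2\lesssim(n\,\Eb\|Y_1\|_\infty^3)^2$ into $n\,\Eb\|Y_1\|_\infty^3$ by invoking the standing hypothesis $n\,\Eb\|Y_1\|_\infty^3\le1$; that hypothesis is available where the lemma is applied, but is not part of the lemma's own statement (compare Lemma~\ref{lemma:u-statistics}, which does state such a hypothesis). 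A cleaner, hypothesis-free bound is Cauchy--Schwarz: $\bigl|\Eb[(e_j^\top Y_1)^3]\bigr|\le n^{-1/2}\bigl(\Eb[(e_j^\top Y_1)^4]\bigr)^{1/2}\le n^{-1/2}\bigl(\Eb\|Y_1\|_\infty^3\bigr)^{1/2}$, giving $n^2\max_j\Eb^2\bigl[|e_j^\top Y_1|^3\bigr]\le n\,\Eb\|Y_1\|_\infty^3$ directly. Second, your parenthetical that the $(n\log(ed)\Eb\|Y_1\|_\infty^3)^{1/2}$ term in the first display is dominated by $\log(ed)$ also relies on a standing hypothesis not stated in the lemma; this is harmless since the lemma's right-hand side retains that term, but it should not be silently dropped in a self-contained proof.
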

\begin{proof}[\bfseries{Proof of \Cref{lemma:moment-bounds-for-Ustats}}]
    All three results can be unified such that $\Eb\left[\max_{j=1,\ldots,d}\Abs{\sum_{i=1}^n\xi_{ij}}^2\right]$ for $\xi_{ij}\in\set{e_j^\top Y_i, (e_j^\top Y_i)^2-1/n, (e_j^\top Y_i)^3-e_j^\top Y_i/n}$. For any choice of $\xi_{ij}$, it holds that 
\begin{align*}
    &\frac{1}{2}\Eb\left[\max_{j=1,\ldots,d}\Abs{\sum_{i=1}^n\xi_{ij}}^2\right]\leq \Eb\left[\max_{j=1,\ldots,d}\Abs{\sum_{i=1}^n\xi_{ij}-\Eb[\xi_{ij}]}^2\right]+\max_{j=1,\ldots,d}\Abs{\sum_{i=1}^n\Eb[\xi_{ij}]}^2
    \\&\qquad = \left(\Eb\left[\max_{j=1,\ldots,d}\sum_{i=1}^n\left(\xi_{ij}-\Eb[\xi_{ij}]\right)\right]\right)^2 + {\rm Var}\left[\max_{j=1,\ldots,d}\sum_{i=1}^n\left(\xi_{ij}-\Eb[\xi_{ij}]\right)\right]+\max_{j=1,\ldots,d}\Abs{\sum_{i=1}^n\Eb[\xi_{ij}]}^2\\
    &\qquad\leq\left(\Eb\left[\max_{j=1,\ldots,d}\sum_{i=1}^n\left(\xi_{ij}-\Eb[\xi_{ij}]\right)\right]\right)^2 + \Eb\left[\max_{j=1,\ldots,d}\sum_{i=1}^n\left(\xi_{ij}-\Eb[\xi_{ij}]\right)^2\right]\\
    &\qquad\qquad+ \max_{j=1,\ldots,d}\Eb\left[\sum_{i=1}^n\left(\xi_{ij}-\Eb[\xi_{ij}]\right)^2\right]+\max_{j=1,\ldots,d}\Abs{\sum_{i=1}^n\Eb[\xi_{ij}]}^2\\
    &\qquad\leq\left(\Eb\left[\max_{j=1,\ldots,d}\sum_{i=1}^n\left(\xi_{ij}-\Eb[\xi_{ij}]\right)\right] \right)^2+ \Eb\left[\max_{j=1,\ldots,d}\sum_{i=1}^n\Set{\left(\xi_{ij}-\Eb[\xi_{ij}]\right)^2-\Eb[\left(\xi_{ij}-\Eb[\xi_{ij}]\right)^2]}\right]\\
    &\qquad\qquad+ 2\max_{j=1,\ldots,d}\Eb\left[\sum_{i=1}^n\left(\xi_{ij}-\Eb[\xi_{ij}]\right)^2\right]+\max_{j=1,\ldots,d}\Abs{\sum_{i=1}^n\Eb[\xi_{ij}]}^2
\end{align*}
where the first inequality is the application of Theorem 11.1 of \citet{Boucheron2013concentration}.
Furthermore, by Proposition B.1 of \citet{kuchibhotla2022least}, it follows for any $q>1$,
\begin{align*}
\left(\Eb\left[\max_{j=1,\ldots,d}\Abs{\sum_{i=1}^n\left(\xi_{ij}-\Eb[\xi_{ij}]\right)}\right]\right)^2\lesssim n\log(1+d)\max_{j}{\rm Var}(\xi_{1j})+\left(n\log^{q-1}(1+d)\Eb[\norm{\xi_1}_\infty^q]\right)^{2/q}.   
\end{align*}
Similarly, for any $q>2$, Proposition B.1 of \citet{kuchibhotla2022least} implies
\begin{align*}
&\Eb\left[\max_{j=1,\ldots,d}\sum_{i=1}^n\Set{\left(\xi_{ij}-\Eb[\xi_{ij}]\right)^2-\Eb[\left(\xi_{ij}-\Eb[\xi_{ij}]\right)^2]}\right]\\
&\qquad\lesssim \left(n\log(1+d)\max_{j}\Eb(\xi_{1j}^4)\right)^{1/2}+\left(n\log^{q/2-1}(1+d)\Eb[\norm{\xi_1}_\infty^{q}]\right)^{2/q}.
\end{align*}
It remains to apply these bounds to each choice in  $\set{e_j^\top Y_i, (e_j^\top Y_i)^2-1/n, (e_j^\top Y_i)^3-e_j^\top Y_i/n}$. This yields the following results:
%\kt{We need to be careful where we take $\|\cdot\|_\infty$ inside}
\begin{align*}
    &\Eb\left[\max_{j=1,\ldots,d}\Abs{\sum_{i=1}^ne_j^\top Y_i}^2\right]\\
    &\qquad \lesssim \log(1+d)+\left(n\log^2(1+d)\Eb[\norm{Y_1}_\infty^3]\right)^{2/3}+\left(n\log(1+d)\Eb[\norm{Y_1}_\infty^3]\right)^{1/2}+n^2\Norm{\Eb[Y_1]}_\infty^2.
\end{align*}
Similarly, 
\begin{align*}
    &\Eb\left[\max_{j=1,\ldots,d}\Abs{\sum_{i=1}^n(e_j^\top Y_i)^2-1}^2\right]\\
    &\qquad \lesssim
        \log(1+d)\max_{j}\sum_{i=1}^n\Eb[(e_j^\top Y_i)^4]+\left(n\log^{1/2}(1+d)\Eb[\norm{Y_1}_\infty^3]\right)^{4/3}+n\Eb[\norm{Y_1}_\infty^3]\\
        &\qquad \lesssim n\log(1+d)\Eb[\norm{Y_1}_\infty^3] + \left(n\log^{1/2}(1+d)\Eb[\norm{Y_1}_\infty^3]\right)^{4/3}.
\end{align*}
% \wc{We can relax the dependency of the log factor once we have the 4th moment.}
Finally, 
\begin{align*}
    &\Eb\left[\max_{j=1,\ldots,d}\Abs{\sum_{i=1}^n(e_j^\top Y_i)^3-\frac{e_j^\top Y_i}{n}}^2\right]\\
    &\qquad \lesssim n^2\Norm{\Eb Y_1}_\infty^2+n^2\max_{j}\Eb^2[\abs{e_j^\top Y_i}^3]+n\Eb[\norm{Y_1}_\infty^3]\\
        &\qquad\qquad +
        \log(1+d)\max_{j}\sum_{i=1}^n\Eb[(e_j^\top Y_i)^3]+\left(n\log^{q-1}(1+d)\Eb[\norm{Y_1}_\infty^{3q}]\right)^{2/q}\\
        &\qquad\lesssim n\log(1+d)\Eb[\norm{Y_1}_\infty^3] + n^2\Norm{\Eb Y_1}_\infty^2+n^2\max_{j}\Eb^2[\abs{e_j^\top Y_i}^3].
\end{align*} We further note that $\Eb[\abs{e_j^\top Y_i}^3]\leq \Eb[\abs{e_j^\top Y_i}^2]=1/n$, and thus,
\begin{eqnarray*}
    \Eb\left[\max_{j=1,\ldots,d}\Abs{\sum_{i=1}^n(e_j^\top Y_i)^3-\frac{e_j^\top Y_i}{n}}^2\right]\lesssim n\log(1+d)\Eb[\norm{Y_1}_\infty^3] + n^2\Norm{\Eb Y_1}_\infty^2.
\end{eqnarray*}
    This concludes the claim.
\end{proof}
\subsection{Gaussian approximation}
\begin{proposition}\label{prop:B.1}
    Let $X_1,\ldots,X_n$ be centered independent random vectors in $\Real^d$ where $X_i=(X_{i1},\ldots,X_{id})^\top$ for all $i=1,\ldots,n$. Suppose that there exists positive constants $b_1$ and $b_2$, and a sequence of positive reals $\set{B_n\geq1}$ such that
    \begin{equation}\label{eq:propB.1:1}
        b_1\leq \min_{1\leq j\leq d}\left(\frac{1}{n}\sum_{i=1}^n\Eb[X_{ij}^2]\right)^{1/2}\quad\mbox{and}\quad\max_{1\leq j\leq d}\left(\frac{1}{n}\sum_{i=1}^n\Eb[X_{ij}^4]\right)^{1/4}\leq b_2 B_n.
    \end{equation} Furthermore, suppose there exists a sequence of reals $\set{D_n\geq1}$ such that
    \begin{equation}\label{eq:propB.1:2}
        \max_{1\leq i\leq n}\left(\Eb[\norm{X_i}_\infty^q]\right)^{1/q}\leq D_n,
    \end{equation} for some $q>2$. Let $\Ac$ be a class of hyperrectangles in $\Real^d$ and let $G\sim\Nc(\mathbf{0},\Sigma_n)$ where $\Sigma_n={\rm Var}(n^{-1/2}\sum_{i=1}^nX_i)$. Then, there exists a constant $C=C(b_1,b_2,q)>0$ such that
    \begin{equation*}
        \sup_{A\in\Ac}\Abs{\Pb\left(\frac{1}{\sqrt{n}}\sum_{i=1}^nX_i\in A\right)-\Pb\left(G\in A\right)}\leq C\left[\left(\frac{B_n^4\log^5(d)}{n}\right)^{1/4}+\frac{D_n\log^{3/2}(d)}{n^{1/2-1/q}}\right].
    \end{equation*}
\end{proposition}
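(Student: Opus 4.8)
The plan is to read off \Cref{prop:B.1} from Theorem~2.5 of \citet{chernozhuokov2022improved}. That result provides, for a normalized sum of independent centered vectors, a Gaussian approximation over a class of simple convex sets (in particular over the sub-class $\Ac$ of hyperrectangles) whose error is controlled by two data-dependent quantities: a ``fourth-moment effective variance'', which under \eqref{eq:propB.1:1} is at most $b_2^2B_n^2$, and a $q$-th moment envelope of the individual summands, which under \eqref{eq:propB.1:2} is at most $D_n$, with the per-coordinate standard-deviation lower bound $b_1$ playing the role of the nondegeneracy constant. Substituting these bounds, restricting the supremum from simple convex sets to $\Ac$, and tracking how the universal constant of Theorem~2.5 depends on $b_1,b_2,q$ produces the stated inequality; this is what ``refines'' refers to. If the cited formulation of Theorem~2.5 assumes an exponential moment on the coordinates, one recovers the polynomial-moment version by rerunning its Lindeberg/Stein interpolation with the truncation performed \emph{inside} each swap, as indicated next.

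To make the mechanism explicit: one swaps $X_i$ for the matching Gaussian summand one index at a time against a smoothed functional $f=g_\beta\circ m$, where $m$ softens the hyperrectangle indicator and $g_\beta$ mollifies further with parameter $\beta$; Nazarov's anti-concentration inequality (\citet{nazarov2003maximal,chernozhukov2017detailed}) together with the lower bound $b_1$ turns the smoothing error into a term of order $\beta^{-1}\mathrm{polylog}(d)n^{-1/2}$, while a Gaussian comparison (Proposition~2.1 of \citet{chernozhuokov2022improved}, exactly as in \Cref{lemma:remainder3}) handles the covariance of the Gaussian surrogate versus $\Sigma_n$. At each swap one writes $X_i=X_i\mathbf{1}\{\norm{X_i}_\infty\le u\}+X_i\mathbf{1}\{\norm{X_i}_\infty>u\}$: a third-order Taylor expansion of $f$ with the derivative bounds $\|\nabla^kf\|\lesssim\beta^{-k}\mathrm{polylog}(d)$ shows that the bounded part contributes an error governed by fourth moments, the source of the $(B_n^4\log^5(d)/n)^{1/4}$ term, whereas the unbounded part enters only through truncated $q$-th-moment quantities such as $\Eb[\norm{X_i}_\infty^q\mathbf{1}\{\norm{X_i}_\infty>u\}]$ multiplied by a bounded first derivative. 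The truncation also perturbs the variances by $O_q(D_n^q u^{2-q})$ per entry, which is folded into the same Gaussian comparison. Optimizing $u$ and $\beta$ collapses these contributions into the $D_n\log^{3/2}(d)n^{1/q-1/2}$ term, up to the constant $C(b_1,b_2,q)$.

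The main obstacle, and the reason the truncation must be interleaved with the interpolation rather than performed once at the outset, is the heavy-tailed part. A naive reduction that first replaces $X_i$ by $X_i\mathbf{1}\{\norm{X_i}_\infty\le u\}$, applies the bounded-coordinate CLT, and then absorbs $\Pb(\norm{n^{-1/2}\sum_i X_i\mathbf{1}\{\norm{X_i}_\infty>u\}}_\infty>\delta)$ via the Nazarov enlargement cannot reach the advertised rate: with only $q$ finite moments the residual maximum concentrates at scale $\asymp D_n\,\mathrm{polylog}(d)\,n^{1/q-1/2}$ and no better, so making that probability smaller than the target while keeping the enlargement cost $\delta\sqrt{\log d}$ of the same order is impossible once $d$ is even moderately large. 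Inside the Lindeberg telescoping there is no such all-or-nothing event --- the heavy tail appears only through a truncated moment against a bounded multiplier --- which is precisely the mechanism already contained in Theorem~2.5 of \citet{chernozhuokov2022improved}. Accordingly, the proof I would actually write is the specialization and constant-bookkeeping of the first paragraph, and the bulk of the ``new'' work, if any, is this interleaving in the second.
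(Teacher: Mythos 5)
Your first paragraph contains a genuine gap. Theorem~2.5 of \citet{chernozhuokov2022improved} cannot be ``read off'' to yield this proposition, because in that theorem the role of $D_n$ is forced to equal $B_n^2$; the remark immediately following the proposition states this explicitly and presents it as the reason the proposition is a \emph{refinement} rather than a restatement. Plugging \eqref{eq:propB.1:1} and \eqref{eq:propB.1:2} into Theorem~2.5 therefore produces a bound with $B_n^2\log^{3/2}(d)/n^{1/2-1/q}$ in place of $D_n\log^{3/2}(d)/n^{1/2-1/q}$, which is a different and generally weaker inequality. Your hedge (``If the cited formulation of Theorem~2.5 assumes an exponential moment\ldots'') also misses the actual obstruction: the issue is not a moment-type mismatch but that the two parameters are fused together in the source theorem and the proposition's whole content is to decouple them.

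Your second paragraph sketches a valid escape route in principle --- rerunning the Lindeberg/Stein interpolation with truncation interleaved into each swap, together with Nazarov anti-concentration, smoothing, and a Gaussian comparison --- and your third paragraph's explanation of why truncation cannot be performed once at the outset is correct. But this re-derives from scratch machinery that is already packaged: the paper instead invokes Lemma~A.1 of \citet{chernozhuokov2022improved} as a black box. That lemma gives, for every smoothing level $\phi>0$, a bound on the Kolmogorov distance in terms of three explicit quantities $\Delta_1$, $\Delta_2(\phi)$, $\Delta_3(\phi)$, and the only new work is to bound $\Delta_2(\phi)$ and $\Delta_3(\phi)$ with $B_n$ and $D_n$ kept separate. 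This is done by the elementary tail split
\begin{equation*}
    Y_{ij}^2\mathbf{1}\{\norm{Y_i}_\infty>A\}\le Y_{ij}^2\mathbf{1}\{|Y_{ij}|>A\}+A^2\mathbf{1}\{\norm{Y_i}_\infty>A\},
\end{equation*}
followed by Markov's inequality to produce one term of order $\Eb[Y_{ij}^4]/A^2$ (controlled by $B_n$) and one of order $\Eb[\norm{Y_i}_\infty^q]/A^{q-2}$ (controlled by $D_n$), and then by a one-line optimization over $\phi$ with $\phi^{-1}=B_n\log^{3/4}(d)n^{-1/4}+D_n\log(d)n^{1/q-1/2}$. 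This is far shorter than re-running the entire interpolation argument, and it is the step your proposal does not identify. If you carry out your second-paragraph plan carefully you would arrive at the same inequality, but you should recognize that the efficient route is to locate the intermediate technical lemma in the cited source and only replace the final bookkeeping, rather than reproving the interpolation bound itself.
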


This proposition refines Theorem~2.5 of \cite{chernozhuokov2022improved} where the author set $D_n=B_n^2$. The current version is more informative since $B_n^2$ and $D_n$ scale differently in general.

%\kt{Compare the result in \citet{koike2021notes}}
\begin{proof}[\bfseries{Proof of Proposition~\ref{prop:B.1}}] We shall show that
\begin{equation*}
        \chi_n:=\sup_{y\in \Real^d}\Abs{\Pb\left(\frac{1}{\sqrt{n}}\sum_{i=1}^nX_i\preceq y\right)-\Pb\left(G\preceq y\right)}\leq C\left[\left(\frac{B_n^4\log^5(d)}{n}\right)^{1/4}+\frac{D_n\log^{3/2}(d)}{n^{1/2-1/q}}\right],
\end{equation*} which implies the desired result. Here, $x\preceq y$ denotes $e_j^\top x\leq e_j^\top y$ for all $j=1,\ldots,d$. We use Lemma~A.1 of \cite{chernozhuokov2022improved} which states that
\begin{align}\label{eq:pf_propA.1:1}
    \chi_n&\leq C\bigg[\phi\log^2(d)\sqrt{\Delta_1}+\phi^3\log^{7/2}(d)\Delta_1+\log(d)\sqrt{\Delta_2(\phi)}\nonumber\\
    &\qquad +\phi\log^{3/2}(d)\Delta_2(\phi)+\log^{3/2}(d)\sqrt{\Delta_3(\phi)}+\frac{\log^{1/2}(d)}{\phi}\bigg],
\end{align} for all $\phi>0$ where $C>0$ only depends on $b_1$ and
\begin{align*}
    \Delta_1 &= \frac{1}{n^2}\max_{1\leq j\leq d}\sum_{i=1}^n\Eb[X_{ij}^4],\quad \Delta_2(\phi)=\max_{1\leq j\leq d}\sum_{i=1}^n\Eb\left[Y_{ij}^2\mathbf{1}\set{\norm{Y_i}_\infty>(\phi\log(d))^{-1}}\right],\\
    \Delta_3(\phi) &= \Eb\left[\max_{1\leq i\leq n}\norm{Y_i}_\infty^2\mathbf{1}\set{\norm{Y_i}_\infty>(\phi\log(d))^{-1}}\right],
\end{align*} and $Y_i = (X_i-\tilde{X}_i)/\sqrt{n}$ for all $i=1,\ldots,n$ and $\set{\tilde X_1,\ldots,\tilde X_n}$ is an independent copy of $\set{X_1,\ldots,X_n}$. First, it is immediate that
\begin{equation}\label{eq:pf_propA.1:3}
    \Delta_1\leq n^{-1}B_n^2.
\end{equation} To analyze $\Delta_2(\phi)$, we first note that
\begin{equation}\label{eq:pf_propA.1:2}
    \max_{1\leq j\leq d}\frac{1}{n}\sum_{i=1}^n\Eb[Y_{ij}^4]\lesssim n^{-2}B_n^4,\quad\mbox{and}\quad \max_{1\leq i\leq n}\Eb\left[\norm{Y_i}_\infty^q\right]\lesssim n^{-q/2}D_n^q,
\end{equation} where $\lesssim$ denotes an inequality that holds up to constant depending only on $b_1,b_2$ and $q$. We observe that
\begin{align*}
    Y_{ij}^2\mathbf{1}\set{\norm{Y_i}_\infty>A}&=Y_{ij}^2\mathbf{1}\set{\abs{Y_{ij}}>A,\norm{Y_i}_\infty>A}+Y_{ij}^2\mathbf{1}\set{\abs{Y_{ij}}\leq A,\norm{Y_i}_\infty>A}\\
    &=Y_{ij}^2\mathbf{1}\set{\abs{Y_{ij}}>A}+Y_{ij}^2\mathbf{1}\set{\abs{Y_{ij}}\leq A,\norm{Y_i}_\infty>A}\\
    &\leq Y_{ij}^2\mathbf{1}\set{\abs{Y_{ij}}>A}+A^2\mathbf{1}\set{\norm{Y_i}_\infty>A},
\end{align*} for any $A>0$. Consequently, we have
\begin{align*}
    \Eb\left[Y_{ij}^2\mathbf{1}\set{\norm{Y_i}_\infty>A}\right]&\leq \Eb\left[Y_{ij}^2\mathbf{1}\set{\abs{Y_{ij}}>A}\right]+A^2\Pb\left(\norm{Y_i}_\infty>A\right)\\
    &\leq\Eb\left[\frac{Y_{ij}^4}{A^2}\mathbf{1}\set{\abs{Y_{ij}}>A}\right]+A^2\frac{\Eb[\norm{Y_i}_\infty^q]}{A^q}\\
    &\leq\frac{\Eb[Y_{ij}^4]}{A^2}+\frac{\Eb[\norm{Y_i}_\infty^q]}{A^{q-2}}.
\end{align*} We take $A=(\phi\log(d))^{-1}$, then \eqref{eq:pf_propA.1:2} with leads to
\begin{equation}\label{eq:pf_propA.1:4}
    \Delta_2(\phi)\lesssim \frac{(\phi\log(d))^2B_n^4}{n}+\frac{(\phi\log(d))^{q-2}D_n^q}{n^{q/2-1}}.
\end{equation} The quantity $\Delta_3(\phi)$ can be controlled as
\begin{align}\label{eq:pf_propA.1:5}
    \Delta_3(\phi)&\leq \sum_{i=1}^n \Eb\left[\norm{Y_i}_\infty^2\mathbf{1}\set{\norm{Y_i}_\infty>(\phi\log(d))^{-1}}\right]\nonumber\\
    &\leq \sum_{i=1}^n (\phi\log(d))^{q-2}\Eb\left[\norm{Y_i}_\infty^q\right]\lesssim\frac{(\phi\log(d))^{q-2}D_n^q}{n^{q/2-1}},
\end{align} where the last inequality follows from \eqref{eq:pf_propA.1:2}. Hence, combining \eqref{eq:pf_propA.1:3}, \eqref{eq:pf_propA.1:4}, \eqref{eq:pf_propA.1:5}, and \eqref{eq:pf_propA.1:1} yields
\begin{align*}
    \chi_n&\lesssim \frac{\phi\log^2(d)B_n}{\sqrt{n}}+\frac{\phi^3\log^{7/2}(d)B_n^2}{n}+\frac{\phi\log^2(d)B_n^2}{\sqrt{n}}+\frac{\phi^{q/2-1}\log^{q/2}(d)D_n^{q/2}}{n^{q/4-1/2}}\\
    &\qquad +\frac{\phi^3\log^{7/2}(d)B_n^4}{n}+\frac{\phi^{q-1}\log^{q-1/2}(d)D_n^{q}}{n^{q/2-1}}+\frac{\phi^{q/2-1}\log^{q/2+1/2}(d)D_n^{q/2}}{n^{q/4-1/2}}+\frac{\log^{1/2}(d)}{\phi},
\end{align*} for all $\phi>0$. Here, we used that $\sqrt{a+b}\leq\sqrt{a}+\sqrt{b}$. Since $B_n\geq1$ and $\log(d)\geq1$, we can further deduce that
\begin{equation*}
    \chi_n\lesssim\frac{\phi\log^2(d)B_n^2}{\sqrt{n}}+\frac{\phi^3\log^{7/2}(d)B_n^4}{n}+\frac{\phi^{q-1}\log^{q-1/2}(d)D_n^{q}}{n^{q/2-1}}+\frac{\phi^{q/2-1}\log^{q/2+1/2}(d)D_n^{q/2}}{n^{q/4-1/2}}+\frac{\log^{1/2}(d)}{\phi}.
\end{equation*} We choose
\begin{equation*}
    \phi^{-1}=\frac{B_n\log^{3/4}(d)}{n^{1/4}}+\frac{D_n\log(d)}{n^{1/2-1/q}}.
\end{equation*} This leads to
\begin{align*}
    \frac{\phi\log^2(d)B_n^2}{\sqrt{n}}+\frac{\phi^3\log^{7/2}(d)B_n^4}{n}&\leq\left(\frac{B_n\log^{3/4}(d)}{n^{1/4}}\right)^{-1}\frac{\log^2(d)B_n^2}{\sqrt{n}}+\left(\frac{B_n\log^{3/4}(d)}{n^{1/4}}\right)^{-3}\frac{\phi\log^{7/2}(d)B_n^4}{n}\\
    &\qquad=\frac{2B_n\log^{5/4}(d)}{n^{1/4}}.
\end{align*} Furthermore, 
\begin{align*}
    &\frac{\phi^{q-1}\log^{q-1/2}(d)D_n^{q}}{n^{q/2-1}}+\frac{\phi^{q/2-1}\log^{q/2+1/2}(d)D_n^{q/2}}{n^{q/4-1/2}}\\
    &\qquad\leq\left(\frac{D_n\log(d)}{n^{1/2-1/q}}\right)^{-(q-1)}\frac{\log^{q-1/2}(d)D_n^{q}}{n^{q/2-1}}+\left(\frac{D_n\log(d)}{n^{1/2-1/q}}\right)^{-(q/2-1)}\frac{\log^{q/2+1/2}(d)D_n^{q/2}}{n^{q/4-1/2}}\\
    &\qquad=\frac{D_n\log^{1/2}(d)}{n^{1/2-1/q}}+\frac{D_n\log^{3/2}(d)}{n^{1/2-1/q}}\leq\frac{D_n\log^{3/2}(d)}{n^{1/2-1/q}}.
\end{align*} Finally, one has
\begin{equation*}
    \frac{\log^{1/2}(d)}{\phi}=\frac{B_n\log^{5/4}(d)}{n^{1/4}}+\frac{D_n\log^{3/2}(d)}{n^{1/2-1/q}}.
\end{equation*} This proves the proposition.
\end{proof}

\bibliographystyle{apalike}
\bibliography{ref.bib}

\end{document}